\newtheorem{theorem}{Theorem}[section]
\newtheorem{lemma}[theorem]{Lemma}
\newtheorem{proposition}[theorem]{Proposition}
\newtheorem{corollary}[theorem]{Corollary}
\theoremstyle{definition}
\theoremstyle{remark}
\newtheorem{remark}[theorem]{Remark}
\def\supp{{\mathrm{supp}\ }}
\def\C{\mathbb C}
\def\N{\mathbb N}
\def\Lt{{\widetilde{L}}}
\def\Ht{{\widetilde{H}}}
\def\eps{\varepsilon}
\def\3{{\ss}}
\def\supp{{\mathrm{supp}\ }}
\def\sing{{\mathrm{sing\,}}}
\def\reg{{\mathrm{reg\,}}}
\def\eta{{(e^{-tA})_{t\geq 0}}}
\newcommand{\norm}[1]{\left\Vert#1\right\Vert}
\newcommand{\be}{\begin{equation}}
\newcommand{\ee}{\end{equation}}
\newcommand{\bea}{\begin{eqnarray}}
\newcommand{\eea}{\end{eqnarray}}
\newcommand{\ben}{\begin{eqnarray*}}
\newcommand{\een}{\end{eqnarray*}}
\numberwithin{equation}{section}
\dedicatory{Dedicated to Fritz Gesztesy on the occasion of his   sixtieth  birthday.}
\thanks{The authors would like to thank the referee for carefully reading an earlier version of the manuscript and Dr.~Rob Davies for producing the figures.}
\title[Rooms and Passages domains]{Some spectral properties of Rooms and Passages domains and their skeletons}
\author{B. M. Brown}
\address{Cardiff School of Computer Science and Informatics, Cardiff University,  Queen's Buildings, 5 The Parade, Roath, Cardiff CF24 3AA}
\email{malcolm@cs.cf.ac.uk}
\author{W. D. Evans}
\address{School of Mathematics Cardiff University,
Senghennydd Road, 
Cardiff, Wales, UK.
CF24 4AG}
\email{EvansWD@cardiff.ac.uk}
\author{I. G. Wood}
\address{School of Mathematics, Statistics  and  Actuarial Science,
Cornwallis Building, 
University of Kent,
Canterbury,
Kent, CT2 7NF}
\email{i.wood@kent.ac.uk}
\date{}
\begin{document}
\begin{abstract}

In this paper we investigate spectral properties of Laplacians on Rooms and Passages domains. In the first part, we use Dirichlet-Neumann bracketing techniques to show that for the Neumann Laplacian in certain Rooms and Passages domains the second term of the asymptotic expansion of the counting function is of order $\sqrt{\lambda}$. For the Dirichlet Laplacian our methods only give an upper estimate of the form $\sqrt{\lambda}$. In the second part of the paper, we consider the relationship between Neumann Laplacians on Rooms and Passages domains and Sturm-Liouville operators on the skeleton. 
\end{abstract}
\maketitle

\section{Introduction}

Let $-\Delta_{N,\Omega}$ denote the Neumann Laplacian on a bounded open subset $\Omega$ of $\mathbb{R}^n, n \ge 2,$  and let $E: H^1(\Omega) \rightarrow L^2(\Omega) $ be the canonical  embedding, where $H^1(\Omega)$ is the standard Sobolov space. Then  $(-\Delta_{N,\Omega} +1)^{-1} =
EE^{*},$ and   $ -\Delta_{N,\Omega}$ has a compact resolvent (and thus a discrete spectrum) if and only if $E$ is compact. The compactness, or otherwise, of $E$ is determined by the nature of the boundary $ \partial \Omega$ of $\Omega.$ In \cite{Bur} it was shown that $E$ being compact is equivalent to a \textit{generalised extension property}, which is that \textit{there exists a function space $\Lambda(\mathbb{R}^n)$ on $\mathbb{R}^n$ which is compactly
embedded in $L^2(B)$ for any ball $B \subset \mathbb{R}^n,$ and is
such that there is a continuous extension $
\mathcal{E}:H^1(\Omega) \rightarrow \Lambda(\mathbb{R}^n).$} In
general $ \Lambda(\mathbb{R}^n)$ is a space of arbitrary
smoothness. If $\partial \Omega \in Lip_{\alpha}, 0<\alpha <1,$ i.e., $\partial \Omega$ coincides with the subgraph of a
$Lip_{\alpha}$ function in a neighbourhood of each point,
then $\Lambda(\mathbb{R}^n) = H^{\alpha}(\mathbb{R}^n),$ the
Sobolev space of order $\alpha,$ so that, in this case, there is a reduction of
smoothness in going from $\Omega$ to $\mathbb{R}^n.$ For domains with singular boundaries, like the ``Rooms and Passages" domain defined in Section 2, the compactness of the
embedding $E$ can be taken as a measure of the smoothness of the boundary,
which is otherwise difficult to describe. When $E$ is not compact, \cite{HSS} shows that for any closed subset $S$ of the non-negative real numbers, there exists a modified ``Rooms and Passages" or a ``Comb" domain such that $S$ equals the essential spectrum of $-\Delta_{N,\Omega}$.

Suppose that  $-\Delta_{N,\Omega}$ has a discrete spectrum and denote the number of its eigenvalues less than $\lambda$ by $N_N(\lambda).$ The problem of determining the asymptotic
behaviour of $N_N(\lambda)$ as $\lambda \rightarrow \infty$ has a
long history. If the boundary $\partial \Omega$ of $\Omega$ is
sufficiently smooth, it has long been known that one has the
asymptotic formula
\begin{equation}\label{1}
    N_N(\lambda) = \omega_n(2\pi)^{-n}|\Omega|\lambda^{n/2} +o(\lambda^{n/2}),
\end{equation}
where $\omega_n$ is the volume of the unit ball in $\mathbb{R}^n$ and $|\Omega|$ is the volume of the domain $\Omega$.   For general $\Omega$ the Weyl term
$\omega_n(2\pi)^{-n}|\Omega|\lambda^{n/2}$ dominates, and in the error $R(\lambda):=
N_N(\lambda) - \omega_n(2\pi)^{-n}|\Omega|\lambda^{n/2},$ it is the boundary $\partial
\Omega,$ rather than any other topological or geometrical feature
of $\Omega$ which is dominant. For instance, in \cite{NS}, it
is shown that if $\partial \Omega \in Lip_{\alpha},0<\alpha<1,$ the
remainder $R(\lambda)=O(\lambda^{(n-1)/2\alpha})$ and this is order
sharp. Here and in the following, the  $O(f(\lambda))$- and $o(f(\lambda))$-notation is to be understood in the limit $\lambda\to \infty$.
Moreover, in \cite{EH2}, a domain of von-Koch snowflake
type
 $\Omega$ is exhibited for which $R(\lambda) \asymp
\lambda^{d_o},$ where $d_o$ denotes the outer Minkowski dimension
of the boundary and $A(\lambda)\asymp B(\lambda)$ means that $|A(\lambda)/B(\lambda)|$ is bounded above and below by positive constants as $\lambda\to \infty$. Specific information about the error is hard to
come by. For general domains one of the few available
techniques is the Courant-Weyl variational method involving
``Dirichlet-Neumann bracketing". This effectively reduces the
problem to estimating the corresponding counting functions
$N_D(\lambda)$ and $N_N(\lambda)$ for Dirichlet and Neumann Laplacians on a set of
cubes which cover $\Omega.$ A variant of this technique is used in \cite{NS} in which cubes are replaced by other relatively simple sets for which the Dirichlet and Neumann Laplacian counting functions can be estimated from above and below. The first part of this paper is a contribution to the study of how the
error term $R(\lambda)$ depends on the boundary $\partial \Omega.$
We look in detail at the much studied ``Rooms and Passages"
domain, in which the Weyl formula (\ref{1}) holds, being
particularly concerned with upper and lower bounds for
$R(\lambda)\lambda^{-1/2}.$

In \cite{ES} it was shown that for a rather restricted class of domains $\Omega,$ (which does not include Rooms and Passages) $-\Delta_{N,\Omega}$ has a compact resolvent if and only if a Sturm-Liouville operator defined on the \textit{skeleton} of $\Omega$ has a compact resolvent. Recall that the \textit{skeleton} of an open set $\Omega$ is the complement of the set of points ${\bf x}$ in $\Omega$ for which there exists a unique point ${\bf y}$ on $\partial \Omega$ such that $|{\bf y}-{\bf x}|$ is equal to the distance of ${\bf x}$ from $\mathbb{R}^n \setminus \Omega.$ The result in \cite{ES} was motivated by Theorem 3.3 in \cite{DS} in which $\Omega $ is a horn, whose skeleton is a half-line. In the second part of the paper (Sections 4 and 5) we investigate this problem for general Rooms and Passages domains. 

\section{Rooms \& Passages domains}

We consider a Rooms and Passages (R\&P  for short) domain $\Omega$ defined as the union of square rooms $R_i\ (i~ \textrm{odd})$ of size $h_i\times h_i$ joined by rectangular passages $P_i\ (i~ \textrm{even})$ of size $h_i\times \delta_i$ with $\delta_i<\min\{h_{i-1},h_{i+1}\}$. We assume $h_i\to 0$ as $i\to\infty$. See Figure \ref{fig:example 1}.

\begin{figure}[htbp] 
    \centering
    \includegraphics[width=4in]{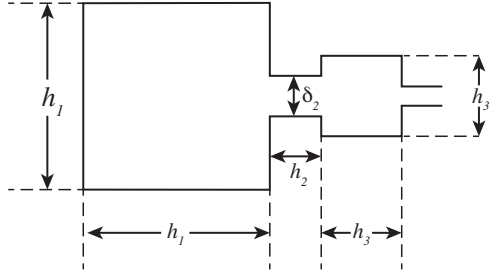}  \vspace{-30pt}
    \caption{The start of a general Rooms and Passages domain.}
    \label{fig:example 1}
\end{figure}

In this section and the next, we further restrict the R\&P domains under consideration by assuming that $h_i=C^i$ and $\delta_i=k C^{i\alpha}$, where $C<1$, $\alpha>1$ and $k$ are constants, with $k<C^{3-2\alpha}$. This guarantees that each passage is narrower than both adjacent rooms.
It follows from the general result in \cite{EH1}, Example 6.1.1, that the embedding $E: H^1(\Omega)\rightarrow L^2(\Omega)$ is compact if and only if $ \alpha < 3.$ 

\begin{proposition} For $\alpha >3$ we have  $0\in \sigma_{ess}(-\Delta_{N,\Omega})$
where $\sigma_{ess}$ denotes the essential spectrum of the operator.
\end{proposition}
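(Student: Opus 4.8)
The plan is to exhibit $0$ as a limit point of the spectrum by constructing an orthonormal sequence in $L^2(\Omega)$ on which the Dirichlet form of $-\Delta_{N,\Omega}$ is arbitrarily small. Since $-\Delta_{N,\Omega}\ge 0$ is the operator associated with the closed form $\mathfrak a[u]=\int_\Omega |\nabla u|^2\,dx$ on $H^1(\Omega)$, the min--max (Glazman) characterisation of the bottom of the essential spectrum reduces matters to the following sufficient condition: it is enough to produce functions $u_1,u_2,\dots\in H^1(\Omega)$ with \emph{pairwise disjoint supports}, $\|u_n\|_{L^2(\Omega)}=1$, and $\mathfrak a[u_n]\to 0$. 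Disjointness of supports makes the sequence orthonormal and, crucially, annihilates all cross terms $\int_\Omega \nabla u_m\cdot\nabla u_n$ with $m\neq n$, so that on the span of any tail $\{u_N,\dots,u_{N+j}\}$ the form is bounded by $\sup_{n\ge N}\mathfrak a[u_n]$. Hence every min--max value equals $0$, giving $\inf\sigma_{ess}(-\Delta_{N,\Omega})=0$; since the spectrum lies in $[0,\infty)$ and $\sigma_{ess}$ is closed (and here nonempty), we conclude $0\in\sigma_{ess}(-\Delta_{N,\Omega})$.

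To build these functions I would work one room at a time. Fix a sparse increasing sequence of odd indices (for instance $i\equiv 1 \pmod 4$) so that distinct chosen rooms are separated by at least two passages; this will guarantee the disjointness of supports used above. For such an index $i$ let $u_i$ equal $1$ on the room $R_i$, decay linearly to $0$ across each adjacent passage $P_{i-1}$ and $P_{i+1}$ in the direction joining the two rooms (i.e.\ over the passage length $h_{i\mp1}$), and vanish on the neighbouring rooms and everywhere beyond. Each $u_i$ is continuous and piecewise linear, hence Lipschitz and a member of $H^1(\Omega)$, and the Neumann form domain imposes no boundary constraint.

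The estimates are then elementary scaling computations. The $L^2$-mass is dominated by the room, on which $u_i\equiv 1$: since a passage $P_j$ has area $h_j\delta_j = k\,C^{j(1+\alpha)}$, which for $\alpha>1$ is negligible compared with the room area $h_i^2=C^{2i}$, one gets $\|u_i\|_{L^2(\Omega)}^2\asymp C^{2i}$. The gradient vanishes on the room, so the energy is concentrated in the two transition passages; a linear transition across $P_j$ (length $h_j$, width $\delta_j$) costs $\asymp (1/h_j)^2(h_j\delta_j)=\delta_j/h_j=k\,C^{j(\alpha-1)}$, and since $C<1$ and $\alpha>1$ the larger of the two contributions is the one from $P_{i-1}$. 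Dividing, the Rayleigh quotient satisfies
\[
\frac{\mathfrak a[u_i]}{\|u_i\|_{L^2(\Omega)}^2}\asymp \frac{C^{(i-1)(\alpha-1)}}{C^{2i}}=C^{-(\alpha-1)}\,C^{\,i(\alpha-3)},
\]
which tends to $0$ as $i\to\infty$ precisely because $\alpha>3$ and $C<1$. Normalising $u_i$ in $L^2$ then yields the required sequence.

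I expect the main obstacle to be the bookkeeping in the energy estimate rather than any conceptual difficulty: one must verify that the room genuinely carries the bulk of the $L^2$ mass while contributing nothing to the gradient, and that the single dominant passage fixes the order of $\mathfrak a[u_i]$. The exponent $i(\alpha-3)$ is exactly the threshold appearing in the compactness criterion quoted from \cite{EH1}, which is a reassuring consistency check: for $\alpha<3$ the same Rayleigh quotient would blow up, in line with the compactness of $E$ in that regime.
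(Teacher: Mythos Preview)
Your proof is correct and follows essentially the same strategy as the paper: both construct test functions that are constant on rooms and transition to zero across passages, then verify that the resulting Rayleigh quotient is of order $C^{i(\alpha-3)}\to 0$ for $\alpha>3$. The only cosmetic differences are that the paper lets each test function plateau over a block of rooms (with supports receding into the tail, so weak convergence to $0$ replaces your disjoint-support orthogonality) and phrases the conclusion via a singular sequence for $(-\Delta_{N,\Omega})^{1/2}$ rather than via min--max, but the computations and the governing exponent are the same.
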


\begin{proof}
We give a simple singular sequence proof. By $\Omega_{j}$ we denote the domain consisting of the first $j$ rooms and passages while $T_{j}=\Omega\setminus\Omega_{j}$ denotes the ``tail". Choose a sequence of $C^\infty$ functions $\varphi_j$ which depend only on the $x$-variable such that $\supp{\varphi_j}\subseteq \Omega_{4j}\setminus\Omega_{2j-1}$, 

\be \label{varphiprime}
 \varphi_j=\left\{\begin{array}{cl} 1 & \hbox{on }\ \Omega_{4j-1}\setminus\Omega_{2j}, \\ 
 0 & \hbox{on }\ \Omega_{4j}^c\cup\Omega_{2j-1}, \end{array}\right. \ \hbox{ and }\
 |\varphi_j'| = \left\{\begin{array}{cl} O(C^{-2j}) & \hbox{on }\ \Omega_{2j}\setminus\Omega_{2j-1}, \\ 
 O(C^{-4j})  & \hbox{on }\ \Omega_{4j}\setminus\Omega_{4j-1}.\end{array}\right. 
\ee
In other words, the function $\varphi_j$ is initially zero, increases to $1$ in the $j$-th passage and falls back to zero in the $2j$-th passage. The support of the sequence of the $\varphi_j$ `disappears' into the tail and each $\varphi_j$ satisfies the Neumann boundary condition on $\partial\Omega$. The singular sequence we wish to consider is given by 
\be
f_j(x,y)=\frac{1}{\norm{\varphi_j}_{L^2(\Omega)}} \varphi_j(x)\ \hbox{ for } (x,y)\in\Omega.
\ee
Obviously, $\norm{f_j}_{L^2(\Omega)}=1$, while $f_j$ weakly converges to $0$ in $L^2(\Omega)$ as $j\to\infty$.

We have the following:
\be
\norm{\varphi_j}_{L^2(\Omega)}\asymp \left(|\Omega_{4j}|-|\Omega_{2j}|\right)^{1/2} \ \hbox{ as } j\to\infty
\ee
and
\bea \nonumber
|\Omega_{4j}|&=&\sum_{i=1,\ i\ odd}^{4j-1}C^{2i}+\sum_{i=1,\ i\ even}^{4j}kC^{i(1+\alpha)}\\
&=& C^{2}\frac{1-C^{8j}}{1-C^{4}}+kC^{2(1+\alpha)}\frac{1-C^{4j(1+\alpha)}}{1-C^{2(1+\alpha)}}.
\eea
Therefore,
$$
|\Omega_{4j}|-|\Omega_{2j}|= C^{2}\frac{C^{4j}-C^{8j}}{1-C^{4}}+kC^{2(1+\alpha)}\frac{C^{2j(1+\alpha)}-C^{4j(1+\alpha)}}{1-C^{2(1+\alpha)}}\ \asymp\ C^{4j} \hbox{ as } j\to\infty,
$$
and for large $j$, $\norm{\varphi_j}_{L^2(\Omega)}$ behaves like $C^{2j}$.

As $\varphi_j'$ is supported only on two passages we get from (\ref{varphiprime}),
\be
\norm{\varphi_j'}^2_{L^2(\Omega)} = O( C^{-8j}kC^{4j(1+\alpha)}+C^{-4j}kC^{2j(1+\alpha)})
= O( C^{-2j(1-\alpha)}) \hbox{ as } j\to\infty.
\ee
Hence, as  $j\to\infty$,
\be
\norm{f_j'}_{L^2(\Omega)}=\frac{1}{\norm{\varphi_j}_{L^2(\Omega)}}\norm{\varphi_j'}_{L^2(\Omega)}\ = O(  C^{-j(1-\alpha)}C^{-2j})\ =\ O(C^{-j(3-\alpha)})\ \to 0
\ee
if  $\alpha>3$.
This implies that $(-\Delta)^{1/2}f_j \to 0$ as  $j\to\infty$ although $\norm{f_j}_{L^2(\Omega)}=1$. Therefore, we have $0\in\sigma_{ess}((-\Delta_{N,\Omega})^{1/2})$, which implies $0\in \sigma_{ess}(-\Delta_{N,\Omega})$ when $\alpha>3$.
\end{proof}

\section{Eigenvalue asymptotics of the Dirichlet and Neumann Laplacians}

The special class of R\&P domains introduced in the previous section  will be considered.
Let $\alpha<3$ and denote by $N_D(\lambda), N_N(\lambda),$ respectively, the counting functions of the Dirichlet and Neumann Laplacians on the R\&P  domain $\Omega$.  To determine their asymptotic behaviour,  we shall apply the Dirichlet-Neumann bracketing technique.


We begin this analysis of the spectrum of the Laplacian on an R\&P domain $\Omega$ by discussing the contribution from the tail.
As before, let $\Omega=\Omega_{2M}\cup T_{2M}$ where $\Omega_{2M}$ consists of the first $2M$ rooms and passages and $T_{2M}$ denotes the tail. On applying Theorems 4.6 and 5.1 to Example 6.1.1 in \cite{EH1}, it follows\footnote{By Theorem 4.6, the bound on $K(T_{2M})$ involves a quantity $c(J_1)$, which according to Theorem 5.1 is bounded by $\sqrt{N_a}$. Noting that their constant $C$ corresponds to our $C^{-1}$, Theorem 6.1 and Example 6.1.1 with $k=2M$ then give $N_a\asymp C^{2M(3-\alpha)}$.} that the optimal constant $K\left(T_{2M}\right)$ in the Poincar$\acute{\hbox{e}}$ inequality
\[
\|f - f_{T_{2M}}\|_{L^2(T_{2M})} \le K\left(T_{2M}\right) \|\nabla f \|_{L^2(T_{2M})},  \quad(f \in H^1(T_{2M})),
\]
satisfies
$$
K\left(T_{2M}\right) \le c C^{(3-\alpha)M} 
$$
where $c$ is a positive constant.

It follows that the Neumann Laplacian on $T_{2M}$  will only contribute the trivial eigenvalue $0$  if 
\begin{equation}\label{la} \lambda < (1/c)^2 C^{2(\alpha - 3) M},\quad 
\hbox{ i.e. }\quad M>\frac{ \log\left(c^2 \lambda \right)  }{2(3-\alpha)\log C^{-1}}.
\end{equation} 
Since $\alpha>1$, the tail after $M$ passages has area
$$ \mid T_{2M} \mid = \frac{C^{2+4M}}{1-C^{4}}+\frac{ k C^{2\left(1+\alpha\right)}C^{2M\left(1+\alpha\right)}}{1-C^{2\left(1+\alpha\right)}} \asymp C^{4M} \hbox{ as } M\to\infty.
$$
It follows that  for $M$ as in \eqref{la}
$$\mid T_{2M }\mid = O( \lambda^{-2/(3 - \alpha)})$$
and so \[
|\Omega| = |\Omega_{2M}| + o(\sqrt{\lambda}).
\]

Now, for $M$ satisfying \eqref{la}, the counting function for the Neumann-Laplacian on $\Omega$ differs by at most 1 from that on $\Omega_{2M}$. 
It follows that 
\begin{equation}\label{error}
R(\lambda) - R_{2M}(\lambda)   =   o(\sqrt{\lambda}),
\end{equation} 
where $R(\lambda)$ and  $R_{2M}(\lambda)$ denote the error term for the problem on $\Omega$ and $\Omega_{2M}$, respectively.

As we have that the $(n+1)$-th Neumann eigenvalue is a lower bound for the $n$-th Dirichlet eigenvalue (see \cite{Fil}), for $\lambda$ satisfying \eqref{la}, the tail cannot contribute any Dirichlet eigenvalue, so the same reasoning as for the Neumann case implies that \eqref{error} also holds in the Dirichlet case.

 \subsection{Asymptotics for $ N_N(\lambda)$} 
Our strategy here is to partition the domain and use the Dirichlet Neumann bracketing technique to obtain the required estimates.
In order to obtain these estimates we first  obtain a lower bound for the number of eigenvalues of the Neumann-Laplacian, we partition the rooms into five sections imposing Neumann boundary conditions on the boundary of $\Omega$ and Dirichlet boundary conditions on all artificially introduced internal boundaries (see Figure \ref{fig:6}). An upper bound is obtained by only introducing an artificial boundary to separate the room from the neighbouring passages and imposing Neumann conditions on all the boundaries (see Figure \ref{fig:4}). This is a simple consequence of the variational principle. 

\begin{figure}[htbp] 
    \centering
    \includegraphics[width=2.5in]{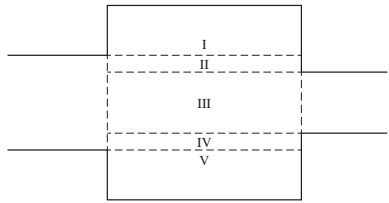} \vspace{-20pt}
    \caption{Artificial boundaries and sub-regions of a room for the lower bound: Neumann conditions on the dotted lines.}
    \label{fig:6}
\end{figure}  
 
\begin{figure}[htbp] 
    \centering
    \includegraphics[width=2in]{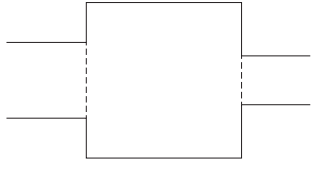} \vspace{-20pt}
    \caption{Artificial boundaries of a room for the upper bound: Dirichlet conditions on the dotted lines.}
    \label{fig:4}
\end{figure}

We first consider the one-dimensional problem on $[-a,a]$ and have the following eigenfunctions and eigenvalues:
\begin{enumerate}
	\item[(a)] Dirichlet conditions at both end points: 
	$$ \psi_m(x)=c\sin{\frac{m\pi(x+a)}{2a}}, \quad \lambda_m=\frac{m^2\pi^2}{4a^2}, \quad m\geq 1.$$
	\item[(b)] Dirichlet conditions at $-a$ and Neumann at $a$: 
	$$ \psi_m(x)=c\sin{\frac{(2m+1)\pi(x+a)}{4a}}, \quad \lambda_m=\frac{(2m+1)^2\pi^2}{16a^2}, \quad m\geq 0.$$
	\item[(c)] Neumann conditions at both end points: 
	$$ \psi_m(x)=c\cos{\frac{m\pi(x+a)}{2a}}, \quad \lambda_m=\frac{m^2\pi^2}{4a^2}, \quad m\geq 0.$$
\end{enumerate}
For the lower estimate for a room, this leads to the following.

\begin{enumerate}
	\item In regions I and V, we have the set of eigenvalues $$\left\{\lambda_{m,n}=\frac{m^2\pi^2}{4a^2}+\frac{(2n+1)^2\pi^2}{16b^2}: \ m,n\geq 0\right\}\quad \hbox{ with } \ a=\frac{C^{j}}{2},\ b=\frac{C^{j}-kC^{\alpha(j-1)}}{4}. $$
	\item In regions II and IV, we have the set of eigenvalues 
	\begin{eqnarray*}
&	\left\{\lambda_{m,n}=\frac{(2m+1)^2\pi^2}{16a^2}+\frac{n^2\pi^2}{4b^2}: \ m\geq 0,\ n\geq 1\right\}\\ 
&\hbox{ with } \ a=\frac{C^{j}}{2},\ b=\frac{k}{4}\left(C^{\alpha(j-1)}-C^{\alpha(j+1)}\right).\end{eqnarray*}
		\item In region III, we have the set of eigenvalues $$\left\{\lambda_{m,n}=\frac{m^2\pi^2}{4a^2}+\frac{n^2\pi^2}{4b^2}: \ m,n\geq 1\right\}\quad \hbox{ with } \ a=\frac{C^{j}}{2},\ b=k\frac{C^{\alpha(j+1)}}{2}. $$
\end{enumerate}

We now need to count the integer lattice points in the first quadrant satisfying $\lambda_{m,n}\leq\lambda$.  By \cite{Hux}, the number of integer lattice points in a plane region $X\cdot R$, where $X$ is a real scaling parameter, is given by
$$\mathcal{N}(X)=AX^2+o(X^{\frac{131}{208}+\eps}) \hbox{ as } X\to\infty$$
for any $\eps>0$, where $A$ is the area of the region $R$. This gives us the following:
\begin{enumerate}
	\item In regions I and V, we have an ellipse with half axes $\frac{2a\sqrt{\lambda}}{\pi}$ and
	$\frac{2b\sqrt{\lambda}}{\pi}$ which is shifted by $-1/2$ in the $y$-direction. As an upper estimate for the area below the $x$-axis we use the area of the rectangle which is subtracted from the area of the quarter ellipse (the error from this can easily be seen to be $o(\sqrt{\lambda}))$. We then add the contributions along the two axes to get
\begin{eqnarray*}
	&& \textrm{card}~\left\{\lambda_{m,n}=\frac{m^2\pi^2}{4a^2}+\frac{(2n+1)^2\pi^2}{16b^2}: \ m,n\geq 0,\ \lambda_{m,n}\leq\lambda\right\}\\
	&&\geq \frac{\pi}{4}\frac{2a\sqrt{\lambda}}{\pi}\frac{2b\sqrt{\lambda}}{\pi}-\frac{1}{2}\frac{2a\sqrt{\lambda}}{\pi}-o(\sqrt{\lambda})+\frac{2a\sqrt{\lambda}}{\pi}+\frac{2b\sqrt{\lambda}}{\pi}\\
	&&=\frac{ab}{\pi}\lambda+\frac{(2b+a)\sqrt{\lambda}}{\pi}-o(\sqrt{\lambda}).
\end{eqnarray*}
Here, $\textrm{card}~A$ denotes the cardinality of the finite set $A$.
	\item In regions II and IV, we  again  have an ellipse with half axes $\frac{2a\sqrt{\lambda}}{\pi}$ and
	$\frac{2b\sqrt{\lambda}}{\pi}$ which  this time  is shifted by $-1/2$ in the $x$-direction. As an upper estimate for the area left of the $y$-axis we use the area of the rectangle which is subtracted from the total area. We then add the contributions along the $y$-axis to get
\begin{eqnarray*}
	&& \textrm{card}~\left\{\lambda_{m,n}=\frac{(2m+1)^2\pi^2}{16a^2}+\frac{n^2\pi^2}{4b^2}: \ m\geq 0,\ n\geq 1,\ \lambda_{m,n}\leq\lambda\right\}\\
	&&\geq \frac{\pi}{4}\frac{2a\sqrt{\lambda}}{\pi}\frac{2b\sqrt{\lambda}}{\pi}-\frac{1}{2}\frac{2b\sqrt{\lambda}}{\pi}-o(\sqrt{\lambda})+\frac{2b\sqrt{\lambda}}{\pi}\\
	&&=\frac{ab}{\pi}\lambda+\frac{b\sqrt{\lambda}}{\pi}-o(\sqrt{\lambda}).
\end{eqnarray*}

\item In region III, we   count the lattice points in an ellipse with half axes $\frac{2a\sqrt{\lambda}}{\pi}$ and
	$\frac{2b\sqrt{\lambda}}{\pi}$ to give
\begin{eqnarray*}
	&& \textrm{card}~\left\{\lambda_{m,n}=\frac{m^2\pi^2}{4a^2}+\frac{n^2\pi^2}{4b^2}: \ m,n\geq 1,\ \lambda_{m,n}\leq\lambda\right\}\\
	&&\geq \frac{\pi}{4}\frac{2a\sqrt{\lambda}}{\pi}\frac{2b\sqrt{\lambda}}{\pi}-o(\sqrt{\lambda})\ =\ \frac{ab}{\pi}\lambda-o(\sqrt{\lambda}).
\end{eqnarray*}
\end{enumerate}
Collecting these results,   we obtain  a lower estimate for  the contribution of the $j$-th room to the counting function of the form 
$$N_j\geq\frac{C^{2j}}{4\pi}\lambda +\left[2C^{j}-\frac{k}{2}\left(C^{\alpha(j+1)}+ C^{\alpha(j-1)}\right)\right]\frac{\sqrt{\lambda}}{\pi}-o(\sqrt{\lambda}).$$

For the upper estimate of the counting function, we simply need to consider the eigenvalues 
$$\left\{\lambda_{m,n}=\frac{m^2\pi^2}{4a^2}+\frac{n^2\pi^2}{4b^2}: \ m,n\geq 0\right\}$$
of the Neumann-Laplacian on the square, where $a=b=\frac{C^{j}}{2}$. In order to count the integer lattice points, we
take the area of the ellipse with half axes $\frac{2a\sqrt{\lambda}}{\pi}$ and $\frac{2b\sqrt{\lambda}}{\pi}$  and add the  additional points along the $x$- and $y$-axes:
\begin{eqnarray*}
	&& \textrm{card}~\left\{\lambda_{m,n}=\frac{m^2\pi^2}{4a^2}+\frac{n^2\pi^2}{4b^2}: \ m,n\geq 0,\ \lambda_{m,n}\leq\lambda\right\}\\
	&&\leq \frac{\pi}{4}\frac{2a\sqrt{\lambda}}{\pi}\frac{2b\sqrt{\lambda}}{\pi}+\frac{2a\sqrt{\lambda}}{\pi}+\frac{2b\sqrt{\lambda}}{\pi}+o(\sqrt{\lambda})\\	&&=\frac{ab}{\pi}\lambda+\frac{2a\sqrt{\lambda}}{\pi}+\frac{2b\sqrt{\lambda}}{\pi}+o(\sqrt{\lambda}).
\end{eqnarray*}
As an upper estimate for the contribution of the $j$-th room, we therefore get
$$N_j\leq\frac{C^{2j}}{4\pi}\lambda +\frac{2C^{j}}{\pi}\sqrt{\lambda}+o(\sqrt{\lambda}).$$

The calculations involving the first room are a little different and we only use three partitions (see Figure \ref{fig:5}).
\begin{figure}[htbp] 
    \centering
    \includegraphics[width=2.5in]{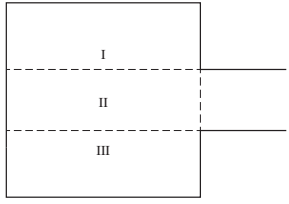} \vspace{-20pt}
    \caption{Subdivisions of the first room for the lower bound: Neumann conditions on the dotted lines.}
    \label{fig:5}
\end{figure} 
\begin{enumerate}
	\item In regions I and III, we have the set of eigenvalues $$\left\{\frac{m^2\pi^2}{4a^2}+\frac{(2n+1)^2\pi^2}{16b^2}: \ m,n\geq 0\right\}\quad \hbox{ with } \ a=\frac{C}{2},\ b=\frac{C-kC^{2\alpha}}{4}. $$
	\item In region II, we have the set of eigenvalues $$\left\{\frac{(2m+1)^2\pi^2}{16a^2}+\frac{n^2\pi^2}{4b^2}: \ m\geq 0,\ n\geq 1\right\}\quad \hbox{ with } \ a=\frac{C}{2},\ b=\frac{kC^{2\alpha}}{2}. $$
\end{enumerate}
Proceeding as for the other rooms we  again need to count integer lattice points in the first quadrant.
\begin{itemize}
	\item In regions I and III, we have 
\begin{eqnarray*}
	&& \textrm{card}~\left\{\lambda_{m,n}=\frac{m^2\pi^2}{4a^2}+\frac{(2n+1)^2\pi^2}{16b^2}: \ m,n\geq 0,\ \lambda_{m,n}\leq\lambda\right\}\\
	&&\geq \frac{ab}{\pi}\lambda+\frac{(2b+a)\sqrt{\lambda}}{\pi}-o(\sqrt{\lambda}).
\end{eqnarray*}

	\item In region II, we have 
\begin{eqnarray*}
	&&\textrm{card}~\left\{\lambda_{m,n}=\frac{(2m+1)^2\pi^2}{16a^2}+\frac{n^2\pi^2}{4b^2}: \ m\geq 0,\ n\geq 1,\ \lambda_{m,n}\leq\lambda\right\}\\
	&&\geq \frac{ab}{\pi}\lambda+\frac{b\sqrt{\lambda}}{\pi}-o(\sqrt{\lambda}).
\end{eqnarray*}
\end{itemize}
Combining these we get  a lower estimate for the contribution of the first room
\begin{eqnarray*}
	N_1&\geq&
	\frac{\lambda C^{2}}{4\pi } +\left(2C-\frac{k}{2}C^{2\alpha}\right)\frac{\sqrt{\lambda}}{\pi}
-o(\sqrt{\lambda}).
\end{eqnarray*}

 We next sum over the rooms (omitting the $o(\sqrt{\lambda})$-term for convenience). Denoting  the volume of the first $M$ rooms by $V(R_{M})$, we see
\begin{eqnarray*}
	\sum_{j=1,  j\; odd}^{2M}  N_j&\geq& \sum_{j=1,  j\; odd}^{2M} \frac{\lambda C^{2 j}}{4 \pi }\\
&& \hspace{-35pt} + \left(2\sum_{j=1,  j\; odd}^{2M}C^{j}-\frac{k}{2} \left(C^{2\alpha}+\sum_{j=3,  j\; odd}^{2M}C^{\alpha(j-1)}\left(1+C^{2\alpha}\right) \right)\right) \frac{\sqrt \lambda}{\pi}\\
 &\hspace{-50pt}=& \hspace{-25pt} \frac{V(R_{M})}{4 \pi}  \lambda + \left(2C\frac{ 1-C^{2M}}{1-C^{2}}-\frac{k}{2}\cdot
\frac{2C^{2\alpha}-C^{2\alpha M}-C^{2\alpha(M+1)}}{1-C^{2\alpha}}\right)\frac{ \sqrt \lambda }{\pi}.
\end{eqnarray*}
Also,
$$\sum_{j=1,  j\; odd}^{2M} N_j\leq
\sum_{j=1, j\; odd}^{2M} \left(\frac{\lambda  C^{2 j}}{4 \pi}+
 \frac{2 \sqrt \lambda C^j}{\pi }\right)=\frac{V(R_{M})}{4 \pi}  \lambda + \frac{ 2 \sqrt \lambda}{\pi C} \left(\frac{C^{2}-C^{2\left(M+1\right)}}{1-C^{2}}\right).
$$
 
 We now proceed to estimate the counting function in a passage.
  Here, 
  $$a=\frac{C^{j}}{2},\;\;b=\frac{k C^{\alpha j}}{2}\ \hbox{ and }\ \lambda_{m,n}=\frac{\pi^2 m^2}{C^{2j}} + \frac{\pi^2n^2}{ C^{2 \alpha j} k^2} ,\ m\geq 1, n\geq 0.$$
 Then, since the lattice point counting estimate  is on the ellipse with semi axes
 $\frac{2a\sqrt{\lambda}}{\pi}$ and $\frac{2b\sqrt{\lambda}}{\pi}$,  we have
 $$N_j \geq \frac{\lambda}{\pi} a b  -o\left(\sqrt \lambda\right) + \frac{2a\sqrt{\lambda}}{\pi}=\frac{k C^{\left(1+\alpha\right)j }}{4\pi }\lambda+ \frac{ C ^{ j}}{\pi } \sqrt \lambda - o\left(\sqrt \lambda\right).$$
  For the upper bound we count the eigenvalues
  $$\lambda_{m,n}=\frac{\pi^2 m^2}{C^{2j}} + \frac{\pi^2n^2}{ C^{2 \alpha j} k^2} ,\ m\geq 0, n\geq 0.$$
  Thus
 $$N_j\leq \frac{\lambda}{\pi} a b  +o\left(\sqrt \lambda\right) + \frac{2a\sqrt{\lambda}}{\pi} + \frac{2b\sqrt{\lambda}}{\pi} =
 \frac{k C^{\left(1+\alpha\right)j}}{4\pi}\lambda + \left( \frac{kC^{\alpha j}}{\pi }+\frac{ C^j}{\pi}\right)\sqrt \lambda + o\left(\sqrt \lambda\right). $$

Summing, we obtain  bounds  for the counting function in the passages
\begin{eqnarray*}
\sum_{j=1, \; even}^{2M} N_j&\geq& \sum_{j=1, \; even}^{2M}\frac{kC^{\left(1+\alpha \right)j}}{4 \pi }\lambda + \frac{C^{j}}{\pi }\sqrt \lambda \ =\ \frac{V(P_M)}{4 \pi}  \lambda + \sum_{i=1}^{M}\frac{C^{2  i}}{\pi }\sqrt \lambda\\
&=&\frac{V(P_M)}{4 \pi}  \lambda +   \frac{\sqrt \lambda}{\pi} \frac{C^{2}-C^{2 \left(M+1\right)}}{1-C^{2}},
\end{eqnarray*}
where $V(P_M)$ denotes the volume of the first $M$ passages.
We have the upper bound 
\begin{eqnarray*}
\sum_{j=1, \; even}^{2M} N_j&\leq& 
\frac{V(P_M)}{4 \pi}  \lambda + \sum_{j=1,\; j\;even}^{2M}\left(
\frac{kC^{\alpha j}}{\pi } +\frac{C^j}{\pi }\right) \sqrt \lambda\\
&&=\frac{V(P_M)}{4 \pi}  \lambda + \frac{\sqrt \lambda}{\pi }  \left(k\sum_{i=1}^M C^{2\alpha i}+\sum_{i=1}^M C^{2 i}\right)\\
&&=
 \frac{V(P_M)}{4 \pi} \lambda+
 \left(k\frac{ C^{2\alpha}-C^{2\alpha \left(1+M\right)}}{1-C^{2\alpha}} + 
\frac{C^{2}-C^{2 \left(1+M\right)}}{1-C^{2}}\right)\frac{\sqrt{\lambda}}{\pi}.
\end{eqnarray*}

We now are in a position to estimate the bounds of the counting function for the domain $ \Omega_{2M}$
Summing  the contributions from the rooms and the passages, we get the   lower estimate
\begin{equation*}
\frac{\mid \Omega_{2M}\mid}{4 \pi} \lambda + \left(\frac{ 2C +C^{2}-2C^{2M+1}-C^{2M+2}}{1-C^{2}}
	-\frac{k}{2}\frac{2C^{2\alpha}-C^{2\alpha M}-C^{2\alpha(M+1)}}{1-C^{2\alpha}}
	\right)
	\frac{ \sqrt \lambda }{\pi}
\end{equation*}
 with the upper estimate   given by
 $$
\frac{\mid \Omega_{2M}\mid}{4 \pi} \lambda + \left( \frac{ 2C+C^{2}-2C^{2M+1}-C^{2M+2}}{1-C^{2}}+k \frac{  C^{2 \alpha}-C^{2\alpha \left(1+M\right)}}{1-C^{2\alpha}}\right)\frac{\sqrt \lambda}{\pi}.
 $$
 As $M\to \infty$, since there is no contribution from the tail  and $C<1$ we get
 $$
 \frac{ \sqrt \lambda}{\pi}
 \left(\frac{ 2C+C^{2}}{1-C^{2}}
	-k\frac{C^{2\alpha}}{1-C^{2\alpha}}
	\right)
 \leq N_N(\lambda)
  -\frac{\mid \Omega\mid}{4 \pi} \lambda \leq \frac{\sqrt \lambda}{\pi} \left( \frac{2C+C^{2}}{1-C^{2}}+\frac{k C^{2\alpha}}{1-C^{2\alpha}}\right).
 $$
 These yield the following result.
 \begin{theorem}
 As $\lambda \to \infty$,
 $$ C_1\frac{\sqrt \lambda}{\pi} 
\leq N_N(\lambda)
  -\frac{\mid \Omega\mid}{4 \pi} \lambda \leq    C_2
 \frac{\sqrt \lambda}{\pi}
 $$
 where $$C_1=\frac{ 2C+C^{2}}{1-C^{2}}
	-k\frac{C^{2\alpha}}{1-C^{2\alpha}}>0, \quad C_2=\frac{2C+C^{2}}{1-C^{2}}+\frac{k C^{2\alpha}}{1-C^{2\alpha}} $$ and $$C_2-C_1=2k \frac{C^{2\alpha}}{1-C^{2\alpha}}\to 0 \ \hbox{ as }\ k\to 0.
 $$
Therefore, the error is precisely of order $\sqrt \lambda$ and in the limit $k\to 0$, we obtain the precise constant.
\end{theorem}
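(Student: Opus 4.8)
The plan is to read the two required inequalities directly off the Dirichlet--Neumann bracketing estimates already assembled above, and then to verify the three algebraic assertions concerning $C_1$ and $C_2$. First I would record the two-sided estimate for $N_N(\lambda)$ on the truncated domain $\Omega_{2M}$: the lower bound coming from imposing Neumann data on $\partial\Omega$ and Dirichlet data on the artificial internal boundaries (the five-region subdivision of each room, the three-region subdivision of the first room, together with the passage estimate) and the upper bound coming from the single artificial boundary separating each room from its neighbouring passages. Summing the per-room contributions $N_j$ and per-passage contributions over $j=1,\dots,2M$ gives the displayed bracket, with leading term $|\Omega_{2M}|\lambda/(4\pi)$ and a $\sqrt\lambda$-coefficient that is an explicit partial geometric sum. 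Letting $M\to\infty$ and invoking the tail analysis — the Poincar\'e bound $K(T_{2M})\le cC^{(3-\alpha)M}$, which for $M$ as in \eqref{la} forces the tail to contribute no nonzero Neumann eigenvalue and to satisfy $|\Omega|=|\Omega_{2M}|+o(\sqrt\lambda)$, hence \eqref{error} — replaces the partial sums by their limits and collapses the bracket to $C_1$ and $C_2$, which is exactly the stated two-sided bound.

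The one genuinely new algebraic point is the positivity $C_1>0$, and here I would use the standing constraint $k<C^{3-2\alpha}$ together with $C<1$ and $\alpha>1$. Since $1-C^{2\alpha}>0$, the constraint gives $k\,C^{2\alpha}<C^{3-2\alpha}C^{2\alpha}=C^{3}$, so $k\,C^{2\alpha}/(1-C^{2\alpha})<C^{3}/(1-C^{2\alpha})$ and it suffices to show $C^{3}/(1-C^{2\alpha})<(2C+C^{2})/(1-C^{2})$. Clearing the positive denominators and dividing by $C>0$ reduces this to $C^{2}(1-C^{2})<(2+C)(1-C^{2\alpha})$; because $\alpha>1$ forces $C^{2\alpha}<C^{2}$, hence $1-C^{2\alpha}>1-C^{2}$, the right-hand side exceeds $(2+C)(1-C^{2})>2(1-C^{2})>C^{2}(1-C^{2})$, so the inequality holds and $C_1>0$. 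The final claim is then immediate: subtracting the two constants gives $C_2-C_1=2k\,C^{2\alpha}/(1-C^{2\alpha})$, which tends to $0$ as $k\to0$, with the common limiting value of the $\sqrt\lambda$-coefficient being $(2C+C^{2})/(1-C^{2})$.

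The main obstacle is not any of these algebraic steps but the bookkeeping of the remainder terms, which I would treat carefully rather than \textquotedblleft omit for convenience\textquotedblright. Each room and passage contributes a lattice-point remainder governed by Huxley's estimate $o(X^{131/208+\eps})$ at scale $X_j\asymp C^{j}\sqrt\lambda$, so the $j$-th remainder is $o\big((C^{j}\sqrt\lambda)^{131/208+\eps}\big)$. The essential observation is that the exponent $131/208$ is strictly less than $1$: summing over the relevant rooms — and for $\lambda$ as in \eqref{la} there are only $O(\log\lambda)$ of them, while the factors $C^{j(131/208+\eps)}$ decay geometrically — produces a convergent geometric series times $\lambda^{(131/208+\eps)/2}$, and since $(131/208+\eps)/2<1/2$ for small $\eps$ this accumulated error is $o(\sqrt\lambda)$. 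This is exactly why the cruder $O(X)$ boundary count would be inadequate: it would give an $O(\sqrt\lambda)$ contribution per room, of the same order as the very second term we are isolating. Combined with the $o(\sqrt\lambda)$ tail contribution, the refined remainder stays genuinely below the order of the second term, so the $\sqrt\lambda$ asymptotics are uncorrupted and the two-sided bound is legitimate down to the stated constants.
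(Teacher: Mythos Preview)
Your argument is correct and follows the paper's approach exactly: the same Dirichlet--Neumann bracketing with the five-region (resp.\ three-region for the first room) subdivisions, the same summation over the rooms and passages of $\Omega_{2M}$, and the same passage $M\to\infty$ via the Poincar\'e tail bound and \eqref{error}. You in fact go beyond the paper on two points it leaves tacit --- the algebraic check that the standing constraint $k<C^{3-2\alpha}$ forces $C_1>0$, and the observation that the Huxley exponent $131/208<1$ keeps the accumulated per-room lattice-point remainders a geometrically convergent sum of total order $o(\sqrt\lambda)$ --- both of which strengthen rather than depart from the paper's argument.
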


\subsection{Asymptotics for $N_D(\lambda)$}

We now estimate bounds for the counting function of   the Dirichlet-Laplacian. In order to do this we again look separately  at  the rooms and  the passages. The lower bound is   obtained  by choosing Dirichlet conditions on all the boundaries. The set of eigenvalues is given by
$$\left\{\frac{m^2\pi^2}{4a^2}+\frac{n^2\pi^2}{4b^2}: \ m,n\geq 1\right\}\quad \hbox{ with } \ a=\frac{C^{j}}{2},\ b=\frac{1}{2}\left\{\begin{array}{ll} C^{j} & \hbox{for rooms,} \\ 
k C^{\alpha j}& \hbox{for passages.}\end{array}\right.  $$
Therefore, the contribution to the lower bound from each room or passage is $\frac{ab}{\pi}\lambda-o(\sqrt{\lambda})$. Adding all contributions, we get the lower bound
$$N_D(\lambda)\geq\frac{\mid \Omega\mid}{4 \pi} \lambda +o(\sqrt{\lambda}).$$
To get an upper bound in the first room, we choose Dirichlet conditions on three sides of the boundary and Neumann conditions on the right side. This gives us the set of eigenvalues  
$$\left\{\frac{(2m+1)^2\pi^2}{16a^2}+\frac{n^2\pi^2}{4b^2}: \ m\geq 0,\ n\geq 1\right\}\quad \hbox{ with } \ a=b=\frac{C}{2}. $$
Counting lattice points gives
$\frac{ab}{\pi}\lambda+\frac{b\sqrt{\lambda}}{\pi}+o(\sqrt{\lambda}),$ so
as an upper estimate for the contribution of the first room, we get
$$\frac{\lambda C^{2}}{4\pi } + \frac{C}{2}\frac{\sqrt{\lambda}}{\pi}
+o(\sqrt{\lambda}).$$
For the remaining rooms and passages we choose Dirichlet conditions on the horizontal boundaries and Neumann conditions on the vertical ones. This gives us the set of eigenvalues
$$\left\{\frac{m^2\pi^2}{4a^2}+\frac{n^2\pi^2}{4b^2}: \ m\geq 0,\ n\geq 1\right\}. $$
Again counting lattice points gives
$\frac{ab}{\pi}\lambda+\frac{2b\sqrt{\lambda}}{\pi}+o(\sqrt{\lambda}).$ 
Summing all contributions yields  as the upper estimate (omitting the $o(\sqrt{\lambda})$-term).
\bea \nonumber
&& \frac{|\Omega|}{4\pi}\lambda +  \left(\frac{C}{2}+\sum_{j=3, j\  odd}^{2M}C^{j}+k\sum_{j=1, j\  even}^{2M}C^{j\alpha} \right) \frac{\sqrt{\lambda}}{\pi} \\ \nonumber
&& = \frac{|\Omega|}{4\pi}\lambda +  \left(C\frac{1-C^{2M}}{1-C^{2}}-\frac{C}{2}+kC^{2\alpha}\frac{1-C^{2\alpha M}}{1-C^{2\alpha}} \right) \frac{\sqrt{\lambda}}{\pi}. 
\eea
Therefore, letting $M\to\infty$, we find the following result.
\begin{theorem}
As $\lambda \to \infty$
$$ o(\sqrt{\lambda})
\leq N_D(\lambda)
  -\frac{\mid \Omega\mid}{4 \pi} \lambda \leq    
 \left(\frac{C(C^2+1)}{2(1-C^2)}+\frac{k}{C^{-2\alpha}-1}\right) \frac{\sqrt \lambda}{\pi}+o(\sqrt{\lambda}).
 $$
 \end{theorem}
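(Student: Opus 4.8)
The plan is to run the same Dirichlet--Neumann bracketing used for $N_N(\lambda)$, but with the two inequalities reversed, since the operator is now the Dirichlet Laplacian. By the variational principle, imposing additional \emph{Dirichlet} conditions on the artificial vertical interfaces between consecutive rooms and passages shrinks the form domain, raises every eigenvalue, and hence lowers the counting function, so the fully decoupled operator bounds $N_D(\lambda)$ from below; replacing those interface conditions by \emph{Neumann} ones (keeping Dirichlet on $\partial\Omega$) enlarges the form domain and bounds $N_D(\lambda)$ from above. As in the Neumann case I would first discard the tail: the reasoning around \eqref{la}--\eqref{error}, which holds for the Dirichlet problem because the $(n+1)$-th Neumann eigenvalue bounds the $n$-th Dirichlet eigenvalue, lets me replace $\Omega$ by $\Omega_{2M}$ up to an $o(\sqrt\lambda)$ error, so it suffices to sum over the first $2M$ rooms and passages and then let $M\to\infty$.

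For the lower bound, Dirichlet conditions on all sides make each piece a pure Dirichlet rectangle of dimensions $2a\times 2b$, with $(a,b)=(C^{j}/2,C^{j}/2)$ for a room and $(a,b)=(C^{j}/2,kC^{\alpha j}/2)$ for a passage, and spectrum $\frac{m^2\pi^2}{4a^2}+\frac{n^2\pi^2}{4b^2}$, $m,n\ge1$. Counting the first-quadrant lattice points via the Huxley estimate, each piece contributes at least $\frac{ab}{\pi}\lambda-o(\sqrt\lambda)$; since $4ab$ equals the area of the piece, the leading terms sum to $\sum_j\frac{a_jb_j}{\pi}\lambda=\frac{|\Omega_{2M}|}{4\pi}\lambda=\frac{|\Omega|}{4\pi}\lambda+o(\sqrt\lambda)$, which is the claimed lower estimate.

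For the upper bound I would put Neumann conditions on the vertical interfaces and Dirichlet conditions on the horizontal parts of $\partial\Omega$. The first room is exceptional, having three sides on $\partial\Omega$, and so carries the mixed spectrum $\frac{(2m+1)^2\pi^2}{16a^2}+\frac{n^2\pi^2}{4b^2}$ ($m\ge0,\ n\ge1$, $a=b=C/2$), contributing $\frac{ab}{\pi}\lambda+\frac{b}{\pi}\sqrt\lambda+o(\sqrt\lambda)$; every other room and passage carries $\frac{m^2\pi^2}{4a^2}+\frac{n^2\pi^2}{4b^2}$ ($m\ge0,\ n\ge1$), contributing $\frac{ab}{\pi}\lambda+\frac{2b}{\pi}\sqrt\lambda+o(\sqrt\lambda)$, where the extra axis points come from the Neumann ($m=0$) direction. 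Collecting the coefficients of $\frac{\sqrt\lambda}{\pi}$ gives $\frac{C}{2}+\sum_{j=3,\,\mathrm{odd}}^{2M}C^{j}+k\sum_{j=2,\,\mathrm{even}}^{2M}C^{\alpha j}$; evaluating the geometric sums and letting $M\to\infty$ produces $\frac{C(C^2+1)}{2(1-C^2)}+\frac{k}{C^{-2\alpha}-1}$, the constant in the theorem.

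The delicate point, and the reason the two sides are so unevenly matched, is the second-order term of the lower bound. The all-Dirichlet decoupling enforces the strict inequalities $m,n\ge1$, which strip away the axis lattice points, so the honest two-term count of each Dirichlet rectangle is $\frac{a_jb_j}{\pi}\lambda-\frac{a_j+b_j}{\pi}\sqrt\lambda+o(\sqrt\lambda)$, with a \emph{negative} boundary contribution. Since $\sum_j(a_j+b_j)$ converges, summing gives $N_D(\lambda)\ge\frac{|\Omega|}{4\pi}\lambda-c\,\frac{\sqrt\lambda}{\pi}+o(\sqrt\lambda)$ for a positive constant $c$: the method cannot supply a positive $\sqrt\lambda$ coefficient on the lower side to match the upper one, which is exactly why for the Dirichlet Laplacian one obtains a sharp $\sqrt\lambda$ estimate only from above, the lower bound capturing the correct leading order $\frac{|\Omega|}{4\pi}\lambda$ but not a matching $\sqrt\lambda$ term. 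A secondary technical point is that I would need the sub-linear Huxley exponent $\frac{131}{208}<1$ rather than the classical $O(\mathrm{perimeter})$ lattice-point bound: after the tail reduction the per-piece errors are summed over $\sim\const\cdot\log\lambda$ pieces whose scales $C^{j}\sqrt\lambda$ span many orders of magnitude, and only a sub-linear exponent keeps the accumulated error $o(\sqrt\lambda)$.
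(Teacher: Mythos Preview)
Your proposal is correct and follows essentially the same route as the paper: all-Dirichlet decoupling on the rectangles for the lower bound, Neumann on the vertical interfaces (Dirichlet on the horizontal parts of $\partial\Omega$, with the first room treated separately) for the upper bound, together with the tail reduction via \eqref{la}--\eqref{error} and Huxley's lattice-point estimate. Your final paragraph, isolating the negative $\sqrt\lambda$ coefficient produced by the strict $m,n\ge1$ condition, is actually more precise than the paper's own accounting and correctly explains why the method yields a sharp $\sqrt\lambda$ upper estimate but no matching lower one.
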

We remark that   
	 the lower bound is given by having Dirichlet boundary conditions everywhere which does not give a $\sqrt{\lambda}$-term. Therefore, we can only get an $o(\sqrt{\lambda})$ error estimate and not determine the sign of the $\sqrt{\lambda}$-term.

\section{A related problem on the skeleton}

We define $ {\bf y} \in \partial \Omega $ to be a near point of ${\bf x} \in \Omega$ if $ |{\bf x} - {\bf y}| = \textrm{dist}({\bf x}, \partial \Omega),$ the distance of ${\bf x}$ to the boundary of $\Omega.$ Therefore, denoting by ${\mathcal N}(\bf x)$ the set of near points of ${\bf x}$, the skeleton $\Gamma$ of $\Omega$ is the set
\[
{\mathcal{S}}(\Omega) := \{ {\bf x} \in \Omega: \textrm{card}~{\mathcal N}({\bf x}) >1\}.
\]
It follows that the skeleton of the R\&P  domain $\Omega$ is the union of a sequence of line segments and parabolic arcs, $\Gamma= \{e_j\}_{j \in \mathbb{N}}$ say, connecting points in $\Omega;$ see Figure \ref{fig3}. For any ${\bf x} \in \Omega$, there exists ${\bf t} \in \Gamma$ such that ${\bf x} $ lies on one of two line segments $C_+({\bf t}), C_-({\bf t})$ connecting ${\bf t}$ to its 2 near points ${\bf y}_+, {\bf y}_-$: set $\tau: \Omega \rightarrow \Gamma , {\bf x} \rightarrow {\bf t}$. If $\tau({\bf x}) = {\bf t} \in e \in \Gamma$, we may therefore define the following co-ordinate system on $\tau^{-1}(e)$:
\begin{equation}\label{S1}
{\bf x} = {\bf x}(\sigma,s),\ \ \tau({\bf x}) = {\bf t} = {\bf t}(\sigma),\ \ s \in (-l(\sigma), l(\sigma)),
\end{equation}
where $\sigma$ denotes arc length along $e$, $s$ is the distance from ${\bf t}$ to ${\bf x}$ along $C_+({\bf t}(\sigma))$ or $C_-({\bf t}(\sigma)),$ $l(\sigma)$ is the length of $C_+({\bf t}(\sigma))$ and $ C_-({\bf t}(\sigma))$ and we set $0 \le s \le l({\bf t}(\sigma))$ along $C_+({\bf t}(\sigma))$ and $ -l({\bf t}(\sigma))\le s \le 0$ along $C_-({\bf t}(\sigma))$. With ${\bf x}= (x,y)$ and $\tau({\bf x}) = (\tau_1({\bf x}), \tau_2({\bf x}))$ it is shown in \cite[(2.4)]{ES}, that the determinant of the Jacobian
\begin{equation}\label{S2}
J:=\left|\frac{\partial(x,y)}{\partial (\sigma, s)}\right| = \frac{1}{|\nabla \tau (\sigma, s)|} := \frac{1}{[|\nabla \tau_1({\bf x})|^2 + |\nabla \tau_2({\bf x})|^2]^{1/2}}.
\end{equation} 
For a measurable subset $\Gamma_0$ of $\Gamma$ and $\Omega_0 := \tau^{-1}(\Gamma_0)$, it then follows that, for any $f \in L^2(\Omega_0)$ with $f=0 $ outside $\Omega_0$,
\begin{equation}\label{S3}
\int_{\Omega_0} f({\bf x}) d {\bf x} = \sum_{j \in \mathbb{N}} \int_{e_j}d \sigma \int_{-l(\sigma)}^{ l(\sigma)} f(\sigma, s) \frac{1}{|\nabla\tau(\sigma,s)|} ds.
\end{equation}
This implies, in particular, that for $f=F \circ \tau$ with $F \in L^2(\Gamma_0),$
\begin{eqnarray}\nonumber
\int_{\Omega_0} F \circ \tau({\bf x}) d {\bf x}& = & \sum_{j \in \mathbb{N}} \int_{e_j}F(\sigma) d \sigma \int_{-l(\sigma)}^{ l(\sigma)}  \frac{1}{|\nabla\tau(\sigma,s)|} ds = \int_{\Gamma_0} F(\sigma) \alpha(\sigma) d \sigma,
\end{eqnarray}
where
\begin{equation}\label{S5}
\alpha(\sigma) := \int_{-l(\sigma)}^{ l(\sigma)}  \frac{1}{|\nabla\tau(\sigma,s)|} ds .
\end{equation}
The integral
\begin{equation}\label{S6}
\beta(\sigma) := \int_{-l(\sigma)}^{ l(\sigma)}  |\nabla\tau(\sigma,s)| ds 
\end{equation}
will also feature in certain specific regions of $\Omega.$

 \begin{figure}[H] 
      \begin{minipage}[t]{0.45\linewidth}
    \centering
    \includegraphics[width=2.0in]{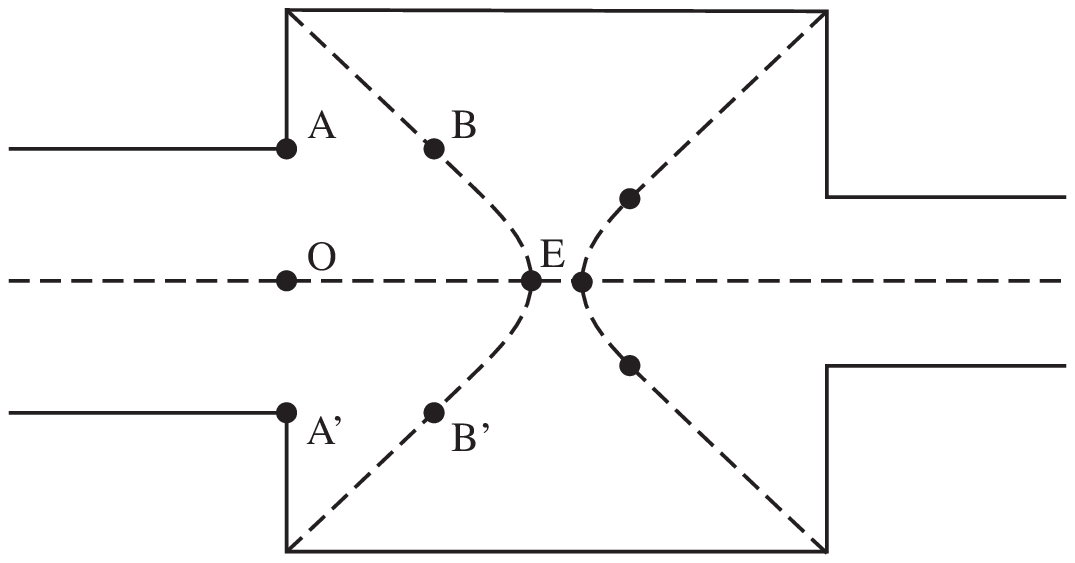}  \vspace{-1pt}
    \caption{The skeleton.}   \label{fig3}
    \end{minipage}
     \begin{minipage}[t]{0.45\linewidth}
    \centering
    \includegraphics[width=2.0in]{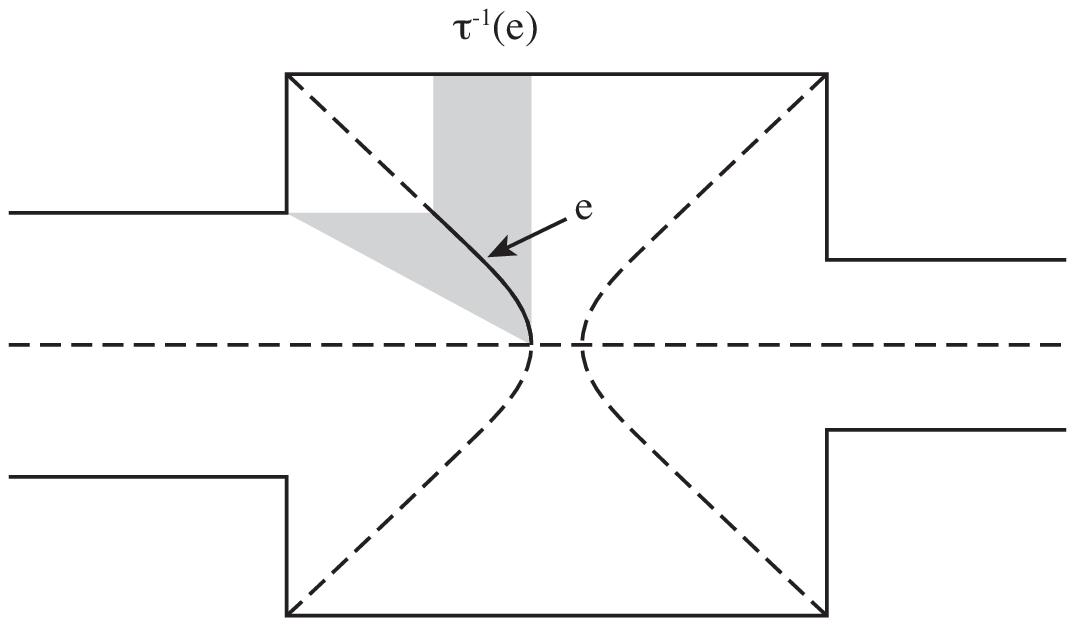}  \vspace{-1pt}
    \caption{ $\tau^{-1}(e) $ for a   parabolic edge.
    }   \label{fig2}
    \end{minipage}%
  
 \end{figure}

We shall be considering a general R\&P domain hereafter, and not the special case of Sections 2 and 3. Therefore we allow for the possibility that the Neumann Laplacian does not have a discrete spectrum.
Our first task is to make explicit the change of co-ordinates \eqref{S1} in each region $ \tau^{-1}(e), e \in \Gamma$ and then determine the map $\tau$. The edges fall into 3 groups which have to be handled separately. In what follows below, we denote the height of a room by $h$ and of a passage by $\delta$. 

\noindent{\bf Group 1} This consists of edges which are either in a passage or lie in the centre of a room with adjacent parabolic edges.  Here, $\sigma=x$, $s=\pm y$, so the determinant of the Jacobian in (\ref{S2}) equals $1$ and $$l(\sigma)=\left\{\begin{array}{ll} h/2 & \hbox{ in rooms},\\   \delta/2 & \hbox{ in passages}. \end{array}  \right.$$
Hence, 
\be\label{abpass}\alpha(\sigma)= \beta(\sigma) = \left\{\begin{array}{ll} h & \hbox{ in rooms},\\   \delta & \hbox{ in passages}. \end{array}  \right.\ee

\noindent {\bf Group 2} These are the edges in a room which are straight line segments along the diagonals. 
In Figure \ref{fig3} with the origin at O, the edge on the diagonal of the square given by $0<x<\frac{h-\delta}{2}$ and $y>\delta/2$ lies in this group and the analysis that follows is typical for all edges in this group.  First, consider the triangle below the bisecting line, i.e. $\delta/2<y<h/2-x$. Here, we re-parameterize points $(x,y)\in\Omega$ by $(\sigma,s)$, where $\sigma$ is the arc length along the skeleton measured from the corner and $s$ is the negative horizontal distance of the point from the skeleton. Thus,
$$(x,y) = \left(\frac{\sigma}{\sqrt{2}}+s, - \frac{\sigma}{\sqrt{2}}+\frac{h}{2}\right),\quad (\sigma,s)= \left(\sqrt{2}(\frac{h}{2}-y), -\frac{h}{2}+x+y\right).$$

In the triangle above the diagonal, where $\frac{h}{2}-x<y<\frac{h}{2}$, we choose $s$ to be the vertical distance to the skeleton. Here,
$$(x,y) = \left(\frac{\sigma}{\sqrt{2}}, - \frac{\sigma}{\sqrt{2}}+\frac{h}{2}+s\right),\quad (\sigma,s)= \left(\sqrt{2}x, -\frac{h}{2}+x+y\right).$$

We note that in this whole square, we have that 
$$\tau(x,y)=\left(\frac{\sigma}{\sqrt{2}},\frac{h}{2}-\frac{\sigma}{\sqrt{2}}\right),$$
with $0<\sigma<\frac{h-\delta}{\sqrt{2}}$. Moreover, the determinant of the Jacobian $J=1/\sqrt{2}$ and $l(\sigma)=\sigma/\sqrt{2}$. Therefore,
\be \label{abcorner}
\alpha(\sigma)=\sigma,\quad \beta(\sigma) =2 \sigma.
\ee

\noindent {\bf Group 3} These are edges which are such that every point on the edge has a re-entrant corner as one of its two near points. Thus in Figure \ref{fig3}, the parabolic edges BE and the line segment OE belong to this group associated with the re-entrant corner A. We consider the parabolic edge BE. For $y>0$ this is determined by $|AQ|=|QQ'|$, where $Q=(x_0,y_0)$ is a point on the parabola and $Q'=(x_0,h/2)$. This gives 
$$x_0^2+\left(y_0-\frac{\delta}{2}\right)^2=\left(y_0-\frac{h}{2}\right)^2.$$
After a little algebra this leads to 
\be \label{parab}
y_0=-\frac{x_0^2}{h-\delta}+\frac14(h+\delta).
\ee
In particular, the parabola intersects the x-axis at $E=\left(\frac{\sqrt{h^2-\delta^2}}{2},0\right)$.

Consider the part of the domain emanating from the re-entrant corner to the parabolic part of the skeleton. Let $\tau(x,y)=(x_0,y_0)$. Then in addition to lying on the parabola, $(x_0,y_0)$ satisfies
\be\label{line}
y_0=\frac{y-\delta/2}{x}x_0+\frac{\delta}{2}.
\ee

The arc length  along the parabola
\be
\sigma=\int_{\frac{1}{2}(h-\delta)}^{x_0} \sqrt{1+\left(\frac{-2x}{h-\delta}\right)^2}\ dx=\frac{h-\delta}{2}\int_{1}^{t_0} \sqrt{1+t^2}\ dt,  
\ee  
with  $t_0=\frac{2x_0}{h-\delta}$.
A straightforward  calculation gives that
 \be
\sigma=
\frac{1}{4} (h-\delta) \left(t_0\sqrt{t_0^2+1} +\sinh
   ^{-1}(t_0)-\sqrt{2}-\sinh ^{-1}(1)\right).
\ee
In particular, the length of the parabolic edge 
is
\begin{equation*}
|CE| 
=  \frac{\sqrt{2}}{4} \sqrt{h(h+\delta)}-\frac{\sqrt{2}}{4}(h-\delta) + \frac{h-\delta}{4}
\left( \sinh^{-1}\left(\sqrt{\frac{h+\delta}{h-\delta}}\right) -\sinh^{-1}\left(1\right)\right).
\end{equation*}
We now use \eqref{parab} and \eqref{line} to express $t_0$ in terms of $x$ and $y$. Eliminating $y_0$ in \eqref{parab}, we get
$$ (h-\delta)\left(\frac{y-\delta/2}{x}x_0+\frac{\delta}{2}-\frac14(h+\delta)\right)+x_0^2=0.$$
This yields
\be
t_0^2+\frac{2y-\delta}{x}t_0-1=0,
\ee
so that 
\be
t_0=\frac{\frac{\delta}{2}-y}{x}+\sqrt{\left(\frac{\frac{\delta}{2}-y}{x}\right)^2+1}.
\ee
For the distance $s$ from $(x,y)$ to $(x_0,y_0)$ this then gives
$$
s^2=  \left(x-\frac{h-\delta}{2}t_0\right)^2 +\left( y- \frac{y-\delta/2}{x}\frac{h-\delta}{2}t_0-\frac{\delta}{2}\right)^2
=  \frac{\left(t_0^2+1\right)^2 \left(x-\frac{h-\delta}{2}t_0\right)^2}{4 t_0^2}
$$
and hence
$$
s=-\frac{\left(t_0^2+1\right) \left(x-\frac{h-\delta}{2}t_0\right)}{2 t_0}.
$$

We need to calculate the determinant of the Jacobian $\frac{\partial(\sigma,s)}{\partial(x,y)}$. Several terms in this Jacobian vanish and we have
$$\left|\frac{\partial(\sigma,s)}{\partial(x,y)}\right| = \left|\frac{\partial \sigma}{\partial t_0} \frac{\partial t_0}{\partial y} \frac{\partial s}{\partial{x}}\right|.$$
A calculation gives
$$\frac{\partial \sigma}{\partial t_0}=
\frac{1}{2} \sqrt{t_0^2+1} (h-\delta),
\quad \frac{\partial s}{\partial{x}}=
-\frac{t_0^2+1}{2 t_0},
\quad \frac{\partial t_0}{\partial y}=-\frac1x
-\frac{\delta-2 y}{x\sqrt{(\delta-2 y)^2+4 x^2}}.
$$
Therefore,
$$
\left|\frac{\partial(\sigma,s)}{\partial(x,y)} \right|
= \frac{\left(t_0^2+1\right)^{3/2} (h-\delta) \left(\frac{ (\delta-2 y)}{\sqrt{(\delta-2 y)^2+4
   x^2}}+1\right)}{4 t_0 x}.$$
Substitution of $t_0$ in terms of $x$ and $y$ gives
\bea \left|\frac{\partial(\sigma,s)}{\partial(x,y)}\right| &
=& 
    \sqrt{2} (h-\delta) \frac{\left((\delta-2 y)^2+4 x^2\right)^{\frac14}}{\left(\sqrt{(\delta-2 y)^2+4 x^2}-\delta+2
   y\right)^{3/2}} .
\eea

To analyse the behaviour of $J^{-1}=\left|\frac{\partial(\sigma,s)}{\partial(x,y)}\right|$ near the re-entrant corner, let $$x= r\cos\theta, \quad y=\frac{\delta}{2}-r\sin\theta \quad (i.e.\ \delta-2y = 2r\sin\theta).$$
Then
$$ \left|\frac{\partial(\sigma,s)}{\partial(x,y)}\right|= \sqrt{2} (h-\delta) \frac{\left(4 r^2\right)^{\frac14}}{\left(2r-2r\sin\theta\right)^{3/2}} =  \frac{h-\delta}{\sqrt{2}r\left(1-\sin\theta\right)^{3/2}},$$ 
so $J^{-1}$ behaves like $1/r$ where $r$ is the distance from the corner. This behaviour of $J^{-1}$ implies that on the parabolic edge 
\begin{equation}\label{30A}
\alpha(\sigma)< \infty, \hbox{ but } \beta(\sigma)=\infty.
\end{equation}
For this reason, we will need to make sure that the weight $\beta$ does not appear in the analysis on those edges (like the parabolic edges) whose points have re-entrant near points. How we do this will be made apparent in the next section.

\section{A Sturm-Liouville operator}
We denote the set of edges of $\Gamma$ which belong to groups 1 and 2 by $\Gamma_{\reg}$ and those in Group 3 by $\Gamma_{\sing}$. Note that $\Gamma_{\sing}$ consists of the parabolic edges and ones like the edge OE in Figure \ref{fig3} which connect an end of a passage and the parabolic edges. The map $\tau$ in (\ref{S2}) maps a re-entrant corner onto every point on a singular edge, which motivates us to define any function $f$ on $ e \in \Gamma_{\sing}$ to be constant.

The underlying Hilbert spaces on $\Gamma$  are as follows:
\be
\Lt^2(\Gamma) = \bigoplus_{e \in \Gamma_{\reg}} L^2(e;\alpha(\sigma) d \sigma)\bigoplus_{e \in \Gamma_{\sing}} \C {\bf 1}_e,
\ee
where ${\bf 1}_e$ is the characteristic function of the edge $e$,  $ L^2(e;\alpha(\sigma) d \sigma)$ is the weighted 
Lebesgue space with inner-product
\be
\int_e f(\sigma) \overline{g(\sigma)} \alpha(\sigma) d \sigma,
\ee
and with $f=(f_e)\in \Lt^2(\Gamma),$ we have $f_e= \textrm{constant}$ for $ e \in \Gamma_{\textrm{sing}};$
\be
\Ht^1(\Gamma) := \bigoplus_{e \in \Gamma_{{\textrm{reg}}}} H^1(e; \alpha, \beta) \bigoplus_{e \in \Gamma_{\textrm{sing}}} \C {\bf 1}_e,
\ee
where for $e \in \Gamma_{\textrm{reg}},$ $H^1(e;\alpha,\beta)$ is the weighted Sobolev space with inner product
\be
(f_e,g_e)_{H^1(e;\alpha,\beta)} = \int_e \left \{\nabla f_e(\sigma) \overline{\nabla g_e(\sigma)} \beta(\sigma) + f_e(\sigma) \overline{g_e(\sigma)} \alpha(\sigma) \right \} d \sigma.
\ee
The inner-products on $\Lt^2(\Gamma)$ and $\Ht^1(\Gamma)$ are respectively,
\be
(f,g) = \sum_{e \in \Gamma} \int_e f(\sigma) \overline{g(\sigma)} \alpha(\sigma) d \sigma \nonumber
\ee
and
\bea
(f,g)_{\Ht^1(\Gamma)} &=& \sum_{e \in \Gamma_{\textrm{reg}}} \int_e \left([\nabla f(\sigma) \cdot \overline{\nabla g(\sigma)}]\beta(\sigma) + [f(\sigma)\overline{g(\sigma)}]\alpha(\sigma) \right) d \sigma \nonumber \\
&+& \sum_{e \in \Gamma_{\textrm{sing}}}\int_e [f(\sigma)\overline{g(\sigma)}]\alpha(\sigma)  d \sigma .
\eea

We define $H_{\Gamma}+ I$ to be the self-adjoint operator in $\Lt^2(\Gamma)$ associated with the $\Ht^1(\Gamma)$ inner-product, where $I$ is the identity operator on $\Lt^2(\Gamma).$ The following theorem is readily proved by a standard argument; cf., \cite{ES}, Theorem 3.3.

\begin{theorem} \label{theorem 5.1}
The domain $D(H_{\Gamma})$ of $H_{\Gamma}$ consists of sequences $u=(u_e)\in \Ht^1(\Gamma)$ which satisfy the following :
\begin{enumerate}
\item $u_e = \textrm{constant}$ for $ e \in \Gamma_{\textrm{sing}};$
\item for each $ e \in \Gamma_{\textrm{reg}}, \beta u_e'$ is locally absolutely continuous on the interior of $e$ and
    \be
    \lim \{ \beta(\sigma) \frac{d u_e}{d\sigma}\} = 0
    \ee
    as $\sigma$ tends to the end points of $e;$
\item for $e \in \Gamma_{\textrm{sing}}, \left(H_{\Gamma}u\right)_e = 0;$
\item for $e \in \Gamma_{\textrm{reg}}
,$
\[
\left(H_{\Gamma}u\right)_e (\sigma) = - \frac{1}{\alpha(\sigma)} \frac{d}{d\sigma} \left[ \beta(\sigma)\frac{du_e}{d\sigma}\right].
\]
\end{enumerate}
\end{theorem}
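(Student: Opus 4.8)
The plan is to invoke the first representation theorem for closed, bounded-below symmetric forms and then to decode the resulting weak identity edge by edge. Since $\mathfrak{a}(u,v):=(u,v)_{\Ht^1(\Gamma)}$ is symmetric, densely defined, and nonnegative (indeed $\mathfrak{a}(u,u)=\|u\|_{\Ht^1(\Gamma)}^2\ge\|u\|_{\Lt^2(\Gamma)}^2$), and is closed because $\Ht^1(\Gamma)$ is complete and continuously embedded in $\Lt^2(\Gamma)$, it is the form of a unique self-adjoint operator $A:=H_\Gamma+I\ge I$. The representation theorem then characterizes its domain: $u\in D(A)$ precisely when $u\in\Ht^1(\Gamma)$ and there is $w\in\Lt^2(\Gamma)$ with $\mathfrak{a}(u,v)=(w,v)$ for every $v\in\Ht^1(\Gamma)$, in which case $Au=w$ and hence $H_\Gamma u=w-u$. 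All four conditions will be read off from this single identity by suitable choices of $v$. Condition (1) is immediate, since membership of $u$ in $\Ht^1(\Gamma)$ already forces $u_e$ to be constant on singular edges.

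Because both $\Lt^2(\Gamma)$ and $\Ht^1(\Gamma)$ are orthogonal direct sums over the edges, I may test with $v$ supported on a single edge, which decouples the problem completely. First I take $v=c\,\mathbf{1}_e$ on a singular edge $e\in\Gamma_{\sing}$: there the form has no gradient term, so the identity reduces to $u_e\int_e\alpha\,d\sigma=w_e\int_e\alpha\,d\sigma$, giving $w_e=u_e$ and hence $(H_\Gamma u)_e=w_e-u_e=0$, which is condition (3). Next I fix a regular edge $e\in\Gamma_{\reg}$ and take $v_e\in C_c^\infty(\mathrm{int}\,e)$, so the identity becomes
\[
\int_e \beta\, u_e'\,\overline{v_e'}\, d\sigma = \int_e \alpha\,(H_\Gamma u)_e\,\overline{v_e}\, d\sigma ,
\]
valid for all such $v_e$. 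This says exactly that $\beta u_e'$ has weak derivative $-\alpha\,(H_\Gamma u)_e$. Since $(H_\Gamma u)_e\in L^2(e;\alpha\,d\sigma)$ and each regular edge satisfies $\int_e\alpha\,d\sigma<\infty$, the Cauchy–Schwarz inequality gives $\alpha\,(H_\Gamma u)_e\in L^1(e)$; hence $\beta u_e'$ is locally absolutely continuous on the interior of $e$, and
\[
(H_\Gamma u)_e = -\frac{1}{\alpha}\frac{d}{d\sigma}\!\left[\beta\,\frac{d u_e}{d\sigma}\right],
\]
which are the first part of (2) and all of (4).

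It remains to extract the endpoint limits in condition (2). Here I keep the test function on the single regular edge $e$ but allow $v_e\in H^1(e;\alpha,\beta)$ with nonzero boundary values. Because $\alpha\,(H_\Gamma u)_e\in L^1(e)$, the function $\beta u_e'$ has genuine limits at the two endpoints of $e$, so the integration by parts
\[
\int_e \beta\, u_e'\,\overline{v_e'}\, d\sigma = \bigl[\beta\, u_e'\,\overline{v_e}\bigr]_{\partial e} + \int_e \alpha\,(H_\Gamma u)_e\,\overline{v_e}\, d\sigma
\]
is legitimate. Comparing this with the form identity $\int_e \beta u_e'\overline{v_e'}\,d\sigma=\int_e\alpha\,(H_\Gamma u)_e\overline{v_e}\,d\sigma$ forces the boundary term $[\beta u_e'\overline{v_e}]_{\partial e}$ to vanish for every admissible $v_e$; choosing $v_e$ with prescribed, independent endpoint values then yields $\lim\beta(\sigma)\,u_e'(\sigma)=0$ at each endpoint. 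The converse inclusion follows by reversing these steps: given $u$ satisfying (1)–(4), one integrates by parts on each regular edge (the boundary terms dropping out by the endpoint condition), adds the singular-edge contributions, and recovers $\mathfrak{a}(u,v)=(H_\Gamma u+u,v)$ for all $v$, so that $u\in D(A)$.

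The main obstacle I anticipate is the degeneracy of the weights at certain endpoints: on the Group 2 (diagonal) edges one has $\alpha(\sigma)=\sigma$ and $\beta(\sigma)=2\sigma$, both vanishing at the corner, and the parabolic edges abut the singular edges where $\beta=\infty$ by \eqref{30A}. I must therefore verify in each of the three geometric groups that $\int_e\alpha\,d\sigma<\infty$ (so that $\alpha\,(H_\Gamma u)_e\in L^1$ and $\beta u_e'$ has endpoint limits), that the weighted space $H^1(e;\alpha,\beta)$ genuinely admits test functions with arbitrary finite endpoint values (a weighted trace statement), and that at a degenerate endpoint the vanishing of $\beta u_e'$ is the correct natural condition rather than an artefact of $\beta\to 0$. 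Away from these degeneracies the argument is the classical passage from a Dirichlet form to its associated Sturm–Liouville operator.
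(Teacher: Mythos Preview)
Your proposal is correct and is precisely the ``standard argument'' the paper alludes to (the paper offers no proof beyond a reference to \cite{ES}, Theorem 3.3): representation theorem, edge-by-edge decoupling, interior test functions for (4) and the first half of (2), and boundary test functions for the endpoint condition. Two small clarifications on the obstacles you flag: the parabolic edges and the segment OE lie in $\Gamma_{\sing}$ (Group~3), so $\beta$ never enters the form there and \eqref{30A} is irrelevant to the domain description; and at the degenerate endpoint $\sigma=0$ of a Group~2 edge the vanishing of $\beta u_e'$ is automatic, since $\beta u_e'$ has an $L^1$ derivative and hence a limit $c_0$ there, while $c_0\neq 0$ would force $\int_e|u_e'|^2\beta\,d\sigma\asymp\int_0 \sigma^{-1}\,d\sigma=\infty$, contradicting $u\in\Ht^1(\Gamma)$.
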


 An important part in the analysis will be played by the following operator which maps functions on the skeleton to functions on the R\&P  domain. Define  
 \be T_0: \Lt^2(\Gamma) \to L^2(\Omega) \hbox{ by } T_0f=f\circ\tau \hbox{ for } f\in\Lt^2(\Gamma).\ee

 \begin{lemma}\label{adjoint}
 Let $e\in \Gamma.$ Then for $g\in L^2(\tau^{-1}(e))$ and $\textbf{t}(\sigma)\in e$ we have
 \be
 (T_0^*g)(\sigma)=
\frac{1}{\alpha(\sigma)}\int_{-l(\sigma)}^{l(\sigma)} g(\sigma,s)\frac{1}{|\nabla\tau(\sigma,s)|}\ ds. 
 \ee
 \end{lemma}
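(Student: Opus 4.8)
The plan is to read off $T_0^*$ directly from the defining relation $\langle T_0 f, g\rangle_{L^2(\Omega)} = \langle f, T_0^* g\rangle_{\Lt^2(\Gamma)}$, transporting the left-hand integral onto the skeleton by means of the change-of-variables identity \eqref{S3}. Since $g$ is supported on $\tau^{-1}(e)$ for the single edge $e$, I would fix such an $e$ and test against an arbitrary $f \in \Lt^2(\Gamma)$ supported on $e$; the entire computation then takes place on $\tau^{-1}(e)$ and the sum over edges in \eqref{S3} collapses to a single term. The computation $\|T_0 f\|_{L^2(\Omega)}^2 = \sum_{e}\int_e |f(\sigma)|^2 \alpha(\sigma)\,d\sigma = \|f\|_{\Lt^2(\Gamma)}^2$, which is itself an instance of \eqref{S3}, shows in passing that $T_0$ is an isometry, so that $T_0^*$ is a well-defined contraction and the identity to be proved is meaningful.

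The key observation is that $T_0 f = f \circ \tau$ is constant along each fibre $\tau^{-1}({\bf t}(\sigma))$, so in the $(\sigma,s)$-coordinates one has $(f\circ\tau)(\sigma,s) = f(\sigma)$, independent of $s$. Starting from
\[
\langle T_0 f, g\rangle_{L^2(\Omega)} = \int_{\tau^{-1}(e)} (f\circ\tau)({\bf x})\,\overline{g({\bf x})}\,d{\bf x},
\]
the integrand $(f\circ\tau)\overline{g}$ lies in $L^1(\Omega)$ by the Cauchy--Schwarz inequality, so \eqref{S3} applies (with the function extended by zero off $\tau^{-1}(e)$) and the $s$-independent factor $f(\sigma)$ may be pulled out of the inner $s$-integral to give
\[
\langle T_0 f, g\rangle_{L^2(\Omega)} = \int_e f(\sigma)\left(\int_{-l(\sigma)}^{l(\sigma)} \overline{g(\sigma,s)}\,\frac{1}{|\nabla\tau(\sigma,s)|}\,ds\right)d\sigma .
\]
Comparing this with $\langle f, T_0^* g\rangle_{\Lt^2(\Gamma)} = \int_e f(\sigma)\,\overline{(T_0^* g)(\sigma)}\,\alpha(\sigma)\,d\sigma$ and letting $f$ range over all of $L^2(e;\alpha\,d\sigma)$, I would conclude, after taking complex conjugates, exactly the claimed formula, the weight $\alpha(\sigma)$ being supplied by its definition \eqref{S5}.

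Before carrying this out I would record that the formula does return an element of $\Lt^2(\Gamma)$. A Cauchy--Schwarz estimate in $s$ against the measure $|\nabla\tau|^{-1}\,ds$ gives
\[
|(T_0^* g)(\sigma)|^2\,\alpha(\sigma) \le \int_{-l(\sigma)}^{l(\sigma)} |g(\sigma,s)|^2\,\frac{1}{|\nabla\tau(\sigma,s)|}\,ds ,
\]
and integrating over $e$ yields $\|T_0^* g\|_{\Lt^2(\Gamma)} \le \|g\|_{L^2(\tau^{-1}(e))}$; this also shows the inner integral is finite for a.e.\ $\sigma$, so the interchange above is legitimate. The argument only uses $\alpha(\sigma) < \infty$, which holds on every edge, including the singular ones by \eqref{30A} — this is precisely why only $\alpha$, and not the $\beta$ from the $H^1$ structure, enters here.

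The step needing the most care is the singular edges $e \in \Gamma_{\sing}$. There $\Lt^2(\Gamma)$ consists only of functions constant on $e$, so $f$ is no longer arbitrary and the pointwise identification is unavailable: one may test only against $f = c\,{\bf 1}_e$, and matching the two inner products then forces $T_0^* g$ to equal the constant given by the $\alpha$-weighted mean over $e$ of the right-hand side. Accordingly, on $\Gamma_{\sing}$ the stated formula is to be read as composed with the orthogonal projection onto $\C{\bf 1}_e$, whereas on the regular edges $\Gamma_{\reg}$ the identification is literal. The latter is the case relevant to the subsequent analysis, and I expect the bookkeeping of conjugates together with this singular-edge caveat to be the only genuinely delicate points.
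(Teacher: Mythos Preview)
Your argument is correct and is essentially the paper's own proof: test the adjoint relation against $f\in L^2(e;\alpha\,d\sigma)$, convert the $L^2(\Omega)$ integral via \eqref{S3}, pull $f(\sigma)$ out of the $s$-integral, and identify. Your additional Cauchy--Schwarz bound and your caveat about $e\in\Gamma_{\sing}$ (where one can only test against constants, forcing $T_0^*g$ to be the $\alpha$-weighted mean) are not in the paper's proof but are exactly the content of the remark that follows the lemma, so you have anticipated that point correctly.
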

 
 \begin{proof}
 For $F\in L^2(e)$,
 \begin{eqnarray*} 
\int_e (T_0^*g)(\sigma) \overline{ F(\sigma)} d\sigma  &=&  \int_{\tau^{-1}(e)} g(\sigma) \overline{(T_0 F)(\sigma)} d\sigma \\
 &=& \int_e \overline{F(\sigma)} \left(\frac{1}{\alpha(\sigma)}\int_{-l(\sigma)}^{l(\sigma)} \frac{g(\sigma,s)}{|\nabla\tau(\sigma,s)|}\ ds\right) \alpha(\sigma)\ d\sigma,
  \end{eqnarray*}
proving the result.
 \end{proof}
\begin{remark}\label{adjoint2}
Note that in particular,  for $\textbf{t}(\sigma)\in e\in\Gamma_{\sing}$, the function $(T_0^*g)(\sigma)$ is constant and takes the value
 \be
 (T_0^*g)(\sigma)=
 \frac{1}{|\tau^{-1}(e)|}\int_{\tau^{-1}(e)} g(x)\ dx. 
 \ee
\end{remark}
\begin{lemma}\label{iso}
$T_0$ is an isometry and so $T_0^{*}T_0 = I$, the identity on $\Lt^2(\Gamma)$.
\end{lemma}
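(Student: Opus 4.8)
The plan is to show that $T_0$ preserves norms, from which $T_0^*T_0 = I$ follows by the standard polarization argument for isometries. Concretely, I would take an arbitrary $f = (f_e) \in \Lt^2(\Gamma)$ and compute $\norm{T_0 f}_{L^2(\Omega)}^2$ directly from the definition $T_0 f = f \circ \tau$, decomposing the integral over $\Omega$ as a sum over the regions $\tau^{-1}(e)$, $e \in \Gamma$. Since $f \circ \tau$ is constant in the transverse variable $s$ on each fibre (it depends only on $\sigma = \tau(\mathbf{x})$), I can apply the change-of-variables formula \eqref{S3} fibre by fibre.

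The key computation is the following: for a single edge $e$, writing $g = f \circ \tau$, formula \eqref{S3} gives
\be
\int_{\tau^{-1}(e)} |f \circ \tau({\bf x})|^2\, d{\bf x} = \int_e |f(\sigma)|^2 \left( \int_{-l(\sigma)}^{l(\sigma)} \frac{1}{|\nabla\tau(\sigma,s)|}\, ds \right) d\sigma = \int_e |f(\sigma)|^2 \alpha(\sigma)\, d\sigma,
\ee
where the inner integral is exactly $\alpha(\sigma)$ by the definition \eqref{S5}. Here I use that $|f \circ \tau|^2$ depends only on $\sigma$, so it pulls out of the $s$-integration. Summing over all $e \in \Gamma$ and comparing with the inner-product on $\Lt^2(\Gamma)$ yields $\norm{T_0 f}_{L^2(\Omega)}^2 = \sum_{e} \int_e |f(\sigma)|^2 \alpha(\sigma)\, d\sigma = \norm{f}_{\Lt^2(\Gamma)}^2$, so $T_0$ is an isometry. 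The identity $T_0^* T_0 = I$ is then immediate since $(T_0^* T_0 f, f) = (T_0 f, T_0 f) = (f,f)$ for all $f$, and $T_0^* T_0$ is self-adjoint.

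One point deserving care, and the only place I expect any subtlety, is the treatment of the singular edges $e \in \Gamma_{\sing}$. By definition $f$ is constant on such an edge, and the underlying space there is $\C\,{\bf 1}_e$ with the weight $\alpha(\sigma)$; one must check that the fibre integral $\int_{-l(\sigma)}^{l(\sigma)} |\nabla\tau|^{-1}\, ds = \alpha(\sigma)$ remains finite, which is guaranteed by \eqref{30A} (it is precisely $\beta$, not $\alpha$, that diverges on these edges). Thus the same identity holds uniformly across both $\Gamma_{\reg}$ and $\Gamma_{\sing}$, and the weight $\beta$ never enters the $L^2$ computation. Since the regions $\tau^{-1}(e)$ partition $\Omega$ up to a null set (the skeleton itself), no contribution is lost, and the argument is complete.
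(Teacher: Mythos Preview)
Your proof is correct and follows essentially the same route as the paper's own argument: both compute $\int_{\tau^{-1}(e)}|f\circ\tau|^2\,d{\bf x}$ via the fibre decomposition and identify the $s$-integral as $\alpha(\sigma)$, then sum over edges. Your additional remarks on the finiteness of $\alpha$ on singular edges and the deduction of $T_0^*T_0=I$ are sound elaborations that the paper leaves implicit.
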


\begin{proof}
Let $F\in \Lt^2(\Gamma)$. 
		Then for any edge $e$, we have
		\bea \nonumber
		  \int_{\tau^{-1}(e)}|(F\circ\tau)(x)|^2\ dx
		 &=&  \int_{e}|F(\sigma)|^2\int_{-l(\sigma)}^{l(\sigma)} \frac{1}{|\nabla\tau(\sigma,s)|}\ ds\ d\sigma\\
		 &=&  \int_{e}|F(\sigma)|^2\alpha(\sigma)\ d\sigma.
		\eea
		Adding the contributions from all edges shows that $\norm{T_0F}_{L^2(\Omega)}=\norm{F}_{\widetilde{L}^2(\Gamma)}$ which completes the proof.
\end{proof}

The proof of the next lemma on how $T_0$ interacts
with derivatives is the same as that of \cite[Lemma 3.2]{ES}.

\begin{lemma}
For $F\in \Ht^1(\Gamma)$,
\be
 \sum_{e\in\Gamma} \int_{\tau^{-1}(e)}|\nabla(F\circ\tau)(x)|^2\ dx
		 = \sum_{e\in\Gamma_{\reg}} \int_{e}|F'(\sigma)|^2\beta(\sigma)\ d\sigma.
\ee
\end{lemma}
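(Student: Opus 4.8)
The plan is to compute $\nabla(F\circ\tau)$ separately on each region $\tau^{-1}(e)$ and then invoke the change-of-variables formula \eqref{S3}. First I would dispose of the singular edges: by the convention adopted for $\Gamma_{\sing}$, the component $F_e$ is constant for $e\in\Gamma_{\sing}$, so $F\circ\tau$ is constant on $\tau^{-1}(e)$ and $\nabla(F\circ\tau)=0$ there. Such edges therefore contribute nothing to the left-hand side, matching the fact that the right-hand side sums only over $\Gamma_{\reg}$. This is precisely what allows $\beta$, which is infinite on the parabolic edges by \eqref{30A}, to be avoided: it is never integrated against a nonzero $F'$.

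For a regular edge $e\in\Gamma_{\reg}$, the function $F\circ\tau$ depends on $\mathbf{x}$ only through the arc-length coordinate $\sigma=\sigma(\mathbf{x})$, so the chain rule gives $\nabla(F\circ\tau)=F'(\sigma)\,\nabla\sigma$ and hence $|\nabla(F\circ\tau)|^2=|F'(\sigma)|^2\,|\nabla\sigma|^2$. The key identity is $|\nabla\sigma|=|\nabla\tau|$. To see this, write $\tau(\mathbf{x})=\mathbf{t}(\sigma(\mathbf{x}))$ with $\mathbf{t}(\sigma)=(t_1(\sigma),t_2(\sigma))$ the arc-length parameterization of $e$; then $\nabla\tau_i=t_i'(\sigma)\,\nabla\sigma$ for $i=1,2$, so that $|\nabla\tau|^2=\big(t_1'(\sigma)^2+t_2'(\sigma)^2\big)|\nabla\sigma|^2=|\nabla\sigma|^2$, since $|\mathbf{t}'(\sigma)|=1$ for an arc-length parameterization. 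This is consistent with the explicit formulas: in Group~1, $\sigma=x$ and $|\nabla\tau|=1$; in Group~2, $|\nabla\sigma|=\sqrt2$ on each of the two triangles while $J=1/\sqrt2$.

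With $|\nabla\sigma|^2=|\nabla\tau|^2$ in hand, I apply \eqref{S3} to $f=|\nabla(F\circ\tau)|^2$ on $\tau^{-1}(e)$:
\[
\int_{\tau^{-1}(e)}|\nabla(F\circ\tau)|^2\,dx
= \int_e |F'(\sigma)|^2\left(\int_{-l(\sigma)}^{l(\sigma)}\frac{|\nabla\tau(\sigma,s)|^2}{|\nabla\tau(\sigma,s)|}\,ds\right)d\sigma
= \int_e |F'(\sigma)|^2\,\beta(\sigma)\,d\sigma,
\]
where the inner integral collapses to $\beta(\sigma)$ by definition \eqref{S6}. Summing over $e\in\Gamma_{\reg}$ and adding the vanishing singular contributions then yields the claimed identity.

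The only genuine subtlety, and the step to be careful with, is regularity. For classically differentiable $F$ the chain rule above is immediate; for general $F\in\Ht^1(\Gamma)$ I would first prove the identity for smooth $F$ and then pass to the limit. Taking smooth $F_n\to F$ in $H^1(e;\alpha,\beta)$, the smooth-case identity applied to differences gives $\|\nabla(T_0F_n-T_0F_m)\|_{L^2(\Omega)}^2=\|F_n'-F_m'\|_{L^2(\beta)}^2$, while Lemma~\ref{iso} gives $\|T_0(F_n-F_m)\|_{L^2(\Omega)}=\|F_n-F_m\|_{\widetilde{L}^2(\Gamma)}$; hence $\{T_0F_n\}$ is Cauchy in $H^1(\Omega)$ with limit $T_0F$, and the identity passes to the limit. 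One must also note that in Group~2 the coordinate $\sigma$ is only piecewise smooth (the two triangles meet along the diagonal), but $\sigma$, and therefore $F\circ\tau$, is continuous across that interface, so its weak gradient carries no singular contribution and the piecewise computation is valid. Since $\beta(\sigma)<\infty$ on every regular edge, the right-hand side is finite and well defined, and the argument proceeds exactly as in \cite[Lemma 3.2]{ES}.
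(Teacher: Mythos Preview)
Your proposal is correct and follows exactly the approach the paper intends: the paper gives no independent argument but simply defers to \cite[Lemma 3.2]{ES}, and your write-up is precisely a spelling-out of that argument adapted to the present edge decomposition, including the key identity $|\nabla\sigma|=|\nabla\tau|$ via the arc-length parameterization and the subsequent use of \eqref{S3} to produce $\beta(\sigma)$. Your handling of the singular edges (constant $F_e$, hence zero gradient) and of the Group~2 piecewise-smoothness is also in line with the intended reasoning.
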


 
 Let $ \hat H^1(\Omega):= \bigoplus_{e\in \Gamma} H^1 (\tau^{-1}(e))$.
 
 \begin{corollary}\label{T1iso} The map $T_1:  \Ht^1(\Gamma) \to \hat H^1(\Omega)$ given by $T_1f = f\circ \tau$ for $f\in\Ht^1(\Gamma)$
   is an isometry.  
 \end{corollary}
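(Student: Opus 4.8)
The plan is to unwind both norms and observe that the statement is nothing more than the sum of the two preceding lemmas. For $f \in \Ht^1(\Gamma)$, the map $T_1 f = f\circ\tau$ lands in $\hat H^1(\Omega) = \bigoplus_{e\in\Gamma} H^1(\tau^{-1}(e))$, and by definition of the $H^1$ inner product on each summand,
\[
\norm{T_1 f}_{\hat H^1(\Omega)}^2 = \sum_{e\in\Gamma} \int_{\tau^{-1}(e)} |\nabla (f\circ\tau)(x)|^2\, dx \;+\; \sum_{e\in\Gamma} \int_{\tau^{-1}(e)} |(f\circ\tau)(x)|^2\, dx.
\]
So the first step is simply to split the $\hat H^1$ norm into its gradient part and its $L^2$ part, each of which has already been computed in the excerpt.

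Next I would treat the two sums separately. To the gradient sum I would apply the preceding lemma on how $T_0$ interacts with derivatives, which yields $\sum_{e\in\Gamma_{\reg}} \int_e |f'(\sigma)|^2 \beta(\sigma)\, d\sigma$; here only the regular edges survive, because on a singular edge $f$ is constant and hence $f\circ\tau$ has vanishing gradient, consistently with that lemma's right-hand side ranging over $\Gamma_{\reg}$ only. To the $L^2$ sum I would apply Lemma \ref{iso} (the $L^2$-isometry of $T_0$), which gives $\sum_{e\in\Gamma} \int_e |f(\sigma)|^2 \alpha(\sigma)\, d\sigma$, the full sum over all edges. Both applications are legitimate since $\Ht^1(\Gamma) \subset \Lt^2(\Gamma)$ with the same singular-edge structure.

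Finally I would reassemble, splitting the $\alpha$-weighted sum over $\Gamma = \Gamma_{\reg}\cup\Gamma_{\sing}$ and comparing with the definition of the $\Ht^1(\Gamma)$ inner product: the regular edges collect $\int_e\bigl(|f'|^2\beta + |f|^2\alpha\bigr)\,d\sigma$ while the singular edges collect $\int_e |f|^2\alpha\,d\sigma$, which is precisely $\norm{f}_{\Ht^1(\Gamma)}^2$. There is no real obstacle here beyond careful bookkeeping; the only point requiring attention is the asymmetric role of the singular edges, where the gradient term must drop out (by constancy of $f$) while the $\alpha$-weighted $L^2$ term is retained, exactly matching the definition of $\Ht^1(\Gamma)$. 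This establishes $\norm{T_1 f}_{\hat H^1(\Omega)} = \norm{f}_{\Ht^1(\Gamma)}$ and hence that $T_1$ is an isometry.
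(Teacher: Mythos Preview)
Your proposal is correct and is exactly the argument the paper intends: the corollary is stated without proof because it follows immediately by combining Lemma~\ref{iso} (the $L^2$-isometry) with the preceding gradient lemma, and your write-up simply makes this bookkeeping explicit.
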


 In the final theorem, $E_1$ denotes the natural embedding of 
 $ \hat H^1(\Omega)$ into the space  $\bigoplus_{e\in \Gamma} L^2   (\tau^{-1}(e))$,
 and $H_\Omega$ denotes the selfadjoint operator associated with   the  $\hat H^1(\Omega)$  inner-product; thus $H_\Omega$ is the orthogonal sum
 $$H_\Omega =  \bigoplus_{e\in \Gamma}  (-\Delta_{N,\tau^{-1}(e)}).
$$

\begin{theorem}\label{Theorem 5.9}
\begin{enumerate}
\item 
 The operator $(H_{\Gamma}+I)^{-1}$ is not compact on $ \Lt^2(\Gamma)$ and $0\in  \sigma_{\textrm{ess}}(H_{\Gamma})$.
 \item $E_1$ is not compact and $0\in \sigma_{\textrm{ess}}(H_\Omega)$.
\item If $E_1(T_1T_1^{*}-I): \hat H^1(\Omega) \rightarrow L^2(\Omega)$ is compact, then $\sigma_{\textrm{ess}}(H_\Omega) \subseteq \sigma_{\textrm{ess}}(H_{\Gamma}).$
\end{enumerate}
\end{theorem}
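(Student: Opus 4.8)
The plan is to dispose of (1) and (2) quickly using the description of $D(H_\Gamma)$ in Theorem \ref{theorem 5.1} and the abundance of singular edges, and then to concentrate the effort on (3). For (1), I would note that for every $e\in\Gamma_{\sing}$ the constant function $\mathbf 1_e$ lies in $D(H_\Gamma)$ and satisfies $H_\Gamma\mathbf 1_e=0$ by part (3) of Theorem \ref{theorem 5.1}. Since a re-entrant corner occurs at each room–passage junction there are infinitely many singular edges, so $\ker H_\Gamma$ is infinite dimensional; hence $0\in\sigma_{\textrm{ess}}(H_\Gamma)$ and $(H_\Gamma+I)^{-1}$ has the eigenvalue $1$ with infinite multiplicity, so it cannot be compact. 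For (2) I would argue identically on $\Omega$: the constant function on each region $\tau^{-1}(e)$ lies in $\ker(-\Delta_{N,\tau^{-1}(e)})$, so $H_\Omega=\bigoplus_e(-\Delta_{N,\tau^{-1}(e)})$ has infinite-dimensional kernel and $0\in\sigma_{\textrm{ess}}(H_\Omega)$; as $(H_\Omega+I)^{-1}=E_1E_1^*$ then has the eigenvalue $1$ of infinite multiplicity, $E_1$ is not compact.

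For (3) the strategy is to show that, under the stated hypothesis, $(H_\Omega+I)^{-1}$ equals, modulo a compact self-adjoint operator, a copy of $(H_\Gamma+I)^{-1}$ sitting inside $L^2(\Omega)$. Writing $E_\Gamma$ for the embedding $\Ht^1(\Gamma)\hookrightarrow\Lt^2(\Gamma)$, the form–operator correspondence gives $(H_\Gamma+I)^{-1}=E_\Gamma E_\Gamma^*$ and $(H_\Omega+I)^{-1}=E_1E_1^*$. The key algebraic fact is the intertwining relation $E_1T_1=T_0E_\Gamma$, immediate since both sides send $f$ to $f\circ\tau$. Using $T_0^*T_0=I$ (Lemma \ref{iso}) and $T_1^*T_1=I$ (Corollary \ref{T1iso}) this yields $E_\Gamma=T_0^*E_1T_1$ and hence $T_0^*E_1=E_\Gamma T_1^*-C$, where $C:=T_0^*E_1(T_1T_1^*-I)$ is compact because $E_1(T_1T_1^*-I)$ is compact by hypothesis. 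Feeding this into $T_0^*(H_\Omega+I)^{-1}T_0=(T_0^*E_1)(T_0^*E_1)^*$ and expanding, every term containing $C$ or $C^*$ is compact, so $T_0^*(H_\Omega+I)^{-1}T_0=(H_\Gamma+I)^{-1}+(\textrm{compact})$.

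To pass from this compression back to $(H_\Omega+I)^{-1}$ itself, let $P:=T_0T_0^*$ be the orthogonal projection of $L^2(\Omega)$ onto the range of $T_0$ (the functions constant along the fibres of $\tau$). The decisive step is to show that the off-diagonal blocks are compact, and this is exactly what the hypothesis buys: combining $E_1T_1=T_0E_\Gamma$ with the relation $T_0^*E_1=E_\Gamma T_1^*-C$ gives the identity $(I-P)E_1=-(I-P)E_1(T_1T_1^*-I)$, whose right-hand side is compact. Hence $(I-P)E_1$, and therefore $(I-P)(H_\Omega+I)^{-1}$, is compact; consequently $(H_\Omega+I)^{-1}-P(H_\Omega+I)^{-1}P$ is compact. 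Conjugating the compression identity by $T_0$ gives $P(H_\Omega+I)^{-1}P=T_0(H_\Gamma+I)^{-1}T_0^*+(\textrm{compact})$, so altogether $(H_\Omega+I)^{-1}=T_0(H_\Gamma+I)^{-1}T_0^*+\mathcal K$ with $\mathcal K$ compact and self-adjoint. Since $T_0$ is an isometry, $T_0(H_\Gamma+I)^{-1}T_0^*$ is unitarily equivalent to $(H_\Gamma+I)^{-1}\oplus 0$ on $\Lt^2(\Gamma)\oplus(\Ran T_0)^\perp$, so Weyl's theorem gives $\sigma_{\textrm{ess}}((H_\Omega+I)^{-1})=\sigma_{\textrm{ess}}((H_\Gamma+I)^{-1})\cup\{0\}$; applying the spectral mapping $\mu\mapsto(\mu+1)^{-1}$ and using $H_\Omega\ge 0$ (so these values are strictly positive and the spurious point $0$ is irrelevant) yields $\sigma_{\textrm{ess}}(H_\Omega)\subseteq\sigma_{\textrm{ess}}(H_\Gamma)$.

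The \emph{main obstacle} I anticipate is the bookkeeping in the last two paragraphs: correctly tracking which compositions are compact (compact-times-bounded on the correct side) and, above all, verifying the identity $(I-P)E_1=-(I-P)E_1(T_1T_1^*-I)$, since it is precisely this identity that converts the hypothesis on $E_1(T_1T_1^*-I)$ into control of the transverse complement $(\Ran T_0)^\perp$. The closing spectral step is routine once one keeps in mind that the extra point $0$, coming from the infinite-dimensional fibre directions, cannot lie in the image of $\mu\mapsto(\mu+1)^{-1}$ for $\mu\in[0,\infty)$.
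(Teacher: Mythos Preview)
Your arguments for (1) and (2) are exactly those of the paper: constants on the infinitely many singular edges give an infinite-dimensional kernel of $H_\Gamma$, and the normalised characteristic functions of the regions $\tau^{-1}(e)$ do the same for $H_\Omega$.

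For (3) the paper gives no details at all; it simply writes ``The proof of this is similar to that of Theorem 4.4 in \cite{ES}.'' Your argument supplies those details and is correct. The key steps you identify are precisely the ones needed: the intertwining $E_1T_1=T_0E_\Gamma$ together with $T_0^*T_0=I$, $T_1^*T_1=I$ gives $T_0^*(H_\Omega+I)^{-1}T_0=(H_\Gamma+I)^{-1}+(\textrm{compact})$; and your identity $(I-P)E_1=-(I-P)E_1(T_1T_1^*-I)$ (which holds because $(I-P)E_1T_1T_1^*=(I-P)T_0E_\Gamma T_1^*=0$) is exactly what turns the hypothesis into compactness of the off-diagonal blocks. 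The final Weyl/spectral-mapping step is handled cleanly. In fact your argument yields the equality $\sigma_{\textrm{ess}}\bigl((H_\Omega+I)^{-1}\bigr)=\sigma_{\textrm{ess}}\bigl((H_\Gamma+I)^{-1}\bigr)\cup\{0\}$, hence $\sigma_{\textrm{ess}}(H_\Omega)=\sigma_{\textrm{ess}}(H_\Gamma)$, which is slightly more than the stated inclusion.
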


\begin{proof}
\begin{enumerate}
	\item  On the singular edges $(H_\Gamma+I)^{-1}$ is just $I$. As there are infinitely many singular edges, $(H_\Gamma+I)^{-1}$ is not compact. Moreover, by considering functions supported on one singular edge, we see that $0$ is an eigenvalue of $H_\Gamma$ of infinite multiplicity.
	\item Take an infinite sequence of edges $(e_n)_{n\in\N}$ and consider the sequence of normalised characteristic functions
	$$\varphi_n(\textbf{x}) = \frac{1}{\sqrt{|\tau^{-1}(e_n)|}}\chi_{\tau^{-1}(e_n)}(\textbf{x}) \hbox{ for } \textbf{x}\in\Omega.$$
 This bounded sequence in $\hat H^1(\Omega)$ has no convergent subsequence in $L^2(\Omega)$. Moreover, every $\varphi_n$ is an eigenfunction of $H_\Omega$ with eigenvalue $0$.
	\item The proof of this is similar to that of Theorem 4.4 in \cite{ES}. 
\end{enumerate}
\end{proof}
It is interesting to compare the results we get in Theorem \ref{Theorem 5.9} with those  obtained in \cite{DS} for horn shaped domains  where  the authors  are able to relate  the essential spectrum of the Neumann Laplacian on the horn to that of  a   Schr\"odinger operator on the skeleton.  In our case the re-entrant corners of the Rooms and Passages domain force us to  introduce singular edges which cause the operator on the skeleton to decouple  and prevent such a detailed result from being obtained.

\end{document}